\newtheorem{theorem}{Theorem}
\newtheorem{claim}{Claim}
\newtheorem{lemma}{Lemma}
\theoremstyle{definition}
\newtheorem{problem}{Problem}
\newtheorem{definition}{Definition}
\newtheorem{corollary}{Corollary}
\title{Triangular Ramsey Numbers}
\author{Zachary Chaney\\
\and
Connor Mattes\\
\and
Jed Menard\\
\and Timothy Trujillo}
\begin{document}
\maketitle
\begin{abstract}
	The purpose of this paper is to introduce the idea of triangular Ramsey numbers and provide values as well as upper and lower bounds for them. To do this, the combinatorial game Mines is introduced; after some necessary theorems about triangular sets are proved. This game is easy enough that young children are able to play. The most basic variations of this game are analyzed and theorems about winning strategies and the existence of draws are proved. The game of Mines is then used to define triangular Ramsey numbers. Lower bounds are found for these triangular Ramsey numbers using the probabilistic method and the theorems about triangular sets. 
\end{abstract}

\section{Combinatorial Games}\label{Section1}
Combinatorial games make it possible to easily explain the underlying workings of combinatorial problems, which in turn help build a deeper understanding of combinatorics. In fact, some combinatorial games are so simple that they where invented as tools to be used in grade school classrooms to introduce students to logical reasoning and mathematical concepts. For example, the game of Tri was introduced by Haggard and Schonberger in \cite{haggard1977game} with the goal of ``developing logical skills of evaluating alternatives and their consequences." The game had the unintended learning outcome of developing the skill of visual disembedding, \emph{i.e.} the skill of picking out simple figures from a more complex image. Haggard and Schonberger in \cite{haggard1977game} point out that this ability has been linked to success in solving mathematical problems. In this paper we introduce a new game in the spirit of Haggard and Schonberger. In theory, these games are simple enough that they can be used to help develop mathematical problem solving skills in primary school students.

This paper focuses on a new combinatorial game called Mines which we use to introduce the notion of a triangular Ramsey number. The game is called Mines, because our original game boards were in the shape of a Reuleaux triangle, the logo of Colorado School of Mines (see Figure \ref{gameboard}). Our main results concern theorems about this game, such as the existence of a winner and the possibility of a winning strategy. The existence of triangular Ramsey numbers follows from the work of Dobrinen and Todorcevic in \cite{DandT}. The primary purpose of introducing the game of Mines is to provide a simplified presentation of the finite-dimensional Ramsey theory of the infinite-dimensional topological Ramsey space $\mathcal{R}_{1}$ introduced and studied by Dobrinen and Todorcevic in \cite{DandT}. The game provides a simplified method for defining triangular Ramsey numbers which are the direct analogue of the Ramsey numbers for the finite-dimensional Ramsey theory of $\mathcal{R}_{1}$. 

\begin{figure}[h]
\label{gameboard}
	\begin{center}
		\begin{tikzpicture}	
		\path[use as bounding box] (-2.5,-5) rectangle (8.5, 0);
		\begin{scope}	
		\draw (0,-0.6) node{} (-1.3,-2.1) node{} (1.3,-2.1) node{} (-2,-4.1) node{} (0,-4.5) node{} (2,-4.1) node{};
		\begin{scope}
		\foreach \x/\y in {1/-0.5, -1/-0.5, 0/1}
		{\clip (\x,\y) arc(120:180:5) arc(-120:-60:5) arc(0:60:5);}
		\fill[gray] (0,0) arc(120:180:5) arc(-120:-60:5) arc(0:60:5);
		\end{scope}
		\begin{pgfinterruptboundingbox}
		\path[clip,draw] (0,0) arc(120:180:5) arc(-120:-60:5) arc (0:60:5);
		\end{pgfinterruptboundingbox}
		\foreach \x/\y in {0/0, 0/1,1/-0.5,-1/-0.5}		
		{\draw (\x,\y) arc(120:180:5) arc(-120:-60:5) arc(0:60:5);}	
		\end{scope}
		
		\begin{scope}	
		\draw (6,-0.6) node{} (5.25,-1.25) node{} (6.75,-1.25) node{} (4.6,-2.1) node{} (6,-2.1) node{} (7.4,-2.1) node{} (4.2,-3.2) node{} (5.2,-3.4) node{} (6.8,-3.4) node{} (7.8,-3.2) node{} (4,-4.1) node{} (5,-4.4) node{} (6,-4.5) node{} (7,-4.4) node{} (8,-4.1) node{};
		\begin{scope}
		\foreach \x/\y in {5/-0.5, 6/2.05, 7.85/-1.1}
		{\clip (\x,\y) arc(120:180:5) arc(-120:-60:5) arc(0:60:5);}
		\fill[gray] (6,0) arc(120:180:5) arc(-120:-60:5) arc(0:60:5);
		\end{scope}
		\begin{scope}
		\foreach \x/\y in {6/1, 4.15/-1.1, 7.85/-1.1}
		{\clip (\x,\y) arc(120:180:5) arc(-120:-60:5) arc(0:60:5);}
		\fill[gray] (6,0) arc(120:180:5) arc(-120:-60:5) arc(0:60:5);
		\end{scope}
		\begin{scope}
		\foreach \x/\y in {7/-0.5, 6/2.05, 4.15/-1.1}
		{\clip (\x,\y) arc(120:180:5) arc(-120:-60:5) arc(0:60:5);}
		\fill[gray] (6,0) arc(120:180:5) arc(-120:-60:5) arc(0:60:5);
		\end{scope}
		\begin{pgfinterruptboundingbox}
		\path[clip,draw] (6,0) arc(120:180:5) arc(-120:-60:5)arc (0:60:5);
		\end{pgfinterruptboundingbox}
		\foreach \x/\y in {6/0, 6/1,7/-0.5,5/-0.5,6/2.05,7.85/-1.1,4.15/-1.1}
		{\draw (\x,\y) arc(120:180:5) arc(-120:-60:5) arc(0:60:5);}
		\end{scope}
		
		\end{tikzpicture}
	\end{center}
	\caption{Two different sized Mines game boards}
\end{figure}
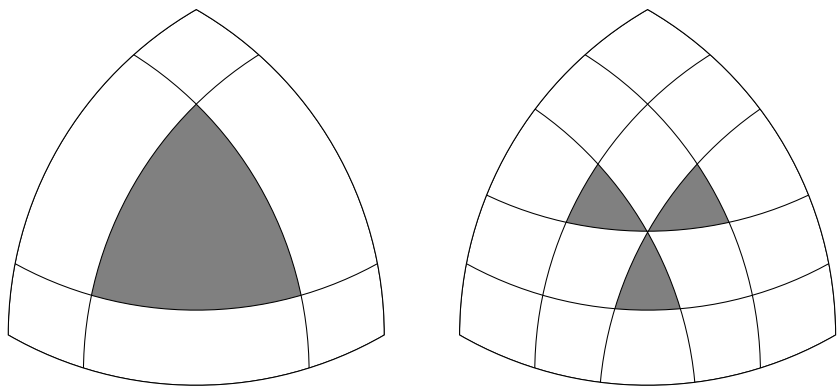

In Section \ref{Section2}, we introduce the games of Tri and Sim. Then we describe how the games can be used to define Ramsey numbers. Near the end of the section we provide a short survey of some known Ramsey numbers and some bounds on unknown Ramsey numbers. The section concludes with a lemma about Tri needed later in the paper. 

In Section \ref{Section3}, in order to help precisely describe the gameboards, we introduce the concept of a triangular set. The remainder of Section \ref{Section3} is devoted to proving combinatorial results related to counting triangular sets. 

In Section \ref{Section4}, we introduce the game of Mines and prove some theorems about its game play. For example, certain variations of Mines have the property that they can never end in a draw.  

In Section \ref{Section5}, we use the game to define the notion of a triangular Ramsey number. Our main results in this section involve finding exact values of some triangular Ramsey numbers and bounds for other triangular Ramsey numbers. Section \ref{Section5} ends by applying the combinatorial results from Section \ref{Section3} and the probabilistic method as pioneered by Erd\H{o}s to find lower bounds for triangular Ramsey numbers. 

In Section \ref{Section6}, we collect together the main results of the paper in Table \ref{TRN}. We then discuss the connection between the triangular Ramsey numbers and the topological Ramsey space $\mathcal{R}_{1}$ introduced by Dobrinen and Todorcevic in \cite{DandT}. We conclude with some open questions and problems related to Mines and triangular Ramsey numbers.

\section{Tri, Sim and Ramsey numbers}\label{Section2}

 Two games that have attracted attention in the literature are  Sim and Tri. The game Sim$_{m}$ was introduced by Simmons in 1969 in \cite{Simmons} and Tri$_{m}$ was introduced by Haggard and Schonberger in 1977 in \cite{haggard1977game}. Both Sim$_{m}$ and Tri$_{m}$ are two player games played on a game board of $m>2$ vertices with $m \choose 2$ possible edges. Each player chooses a color, players alternate turns coloring uncolored edges using their color. Both games end when a monochromatic triangle is constructed (three vertices all of whose edges have the same color) or all edges have been colored. In Tri$_{m}$ the winner is the player that constructs a monochromatic triangle. In Sim$_{m}$ a player wins if they can force the other player to construct a monochromatic triangle. In either game, if no monochromatic triangle is constructed then we say the game ends in a draw. The finite Ramsey theorem for pairs implies that there exists a natural number $m$ such that neither Tri$_{m}$ nor Sim$_{m}$ ever ends in a draw. 
\begin{problem}
 Find the smallest natural number $m>2$ such that neither Tri$_{m}$ nor Sim$_{m}$ ever ends in a draw.
\end{problem}

The solution to Problem 1 is $m=6$.  For $n< m$, the two games have natural generalizations to Tri$_{m}(n)$ and Sim$_{m}(n)$. The only difference being that these versions end when a monochromatic complete graph with $n$ vertices is constructed. In this notation Tri$_{m}$ and Sim$_{m}$ correspond to Tri$_{m}(3)$ and Sim$_{m}(3)$. The Finite Ramsey Theorem for pairs implies that for all natural numbers $n$ there exists a natural number $m$ such that neither Tri$_{m}(n)$ nor Sim$_{m}(n)$ ever ends in a draw. 
\begin{problem} Let $n$ be a natural number greater than 2. Find the smallest natural number $m\ge n$ such that neither Tri$_{m}(n)$ nor Sim$_{m}(n)$ ever ends in a draw.
\end{problem}

The solution to Problem 2 for the natural number $m$ is called the \emph{Ramsey number for $n$} and denoted by $R(n)$. It is known that $R(3)=6$ and $R(4)=18$. However, $R(5)$ still remains unknown.  Figure \ref{Ramsey Numbers} gives upper and lower bounds for some small Ramsey numbers. For example, from the table we have $43\le R(5) \le 49$. In other words, there is a game of Tri$_{43}(5)$ that ends in a draw and no game of Tri$_{49}(5)$ can end in a draw. The lower bound in the second to last row of the table is the lower bound obtain by Erd\H{o}s using the probabilistic method in \cite{ProbMethod}. The last row gives the best known upper and lower bounds.

\begin{figure}[h]
\label{Ramsey Numbers}
\begin{center}
$$\arraycolsep=6pt\def\arraystretch{1.25}
\begin{array}{|c|c|c|c|c|c|c|} \hline
n&$Lower Bound$&R(n)&$Upper Bound$& $References$ \\ \hline 
3&$-$&$6$& $-$&\cite{GG} \\ \hline
$4$&$-$&$18$&$-$&\cite{GG} \\ \hline
$5$&$43$&$?$&$49$ &\cite{Ex} \ \cite{MR} \\ \hline
$6$&$102$&$?$&$165$ &\cite{Ka} \ \cite{Mac} \\ \hline
$7$&$205$&$?$&$540$ &\cite{She} \ \cite{Mac} \\ \hline
n& 2^{n/2} & ? & 4^{n-1}& \cite{ProbMethod} \cite{ProbMethod2} \\ \hline
n& n2^{n/2}[\sqrt{2}/e+o(1)] & ? & n^{-C\frac{\log n}{\log \log n}} 4^{n} & \cite{Spencer} \ \cite{Conlon} \\\hline
\end{array}$$
\end{center}
\caption{Table of upper and lower bounds of some Ramsey numbers and their references.}
\end{figure}

The next Lemma about Tri will be used later to obtain an upper bound for a small triangular Ramsey number. For natural numbers $k$ the notation $R^{k}(n)$ denotes the Ramsey number $\underbrace{R(R(\cdots (R}_{k-times}(n))\cdots))$.
\begin{lemma}\label{TriLemma}
Let $k$ be a natural number. If $k$ games of Tri$_{R^{k}(3)}(3)$ are played on the same game board then there exists a complete graph with three vertices that is monochromatic for each of the $k$ games.
\end{lemma}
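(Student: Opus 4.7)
My plan is to prove the lemma by induction on $k$, using the definition of $R(n)$ as the threshold at which any 2-coloring of $K_m$ forces a monochromatic $K_n$, applied repeatedly to peel off one game at a time.

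For the base case $k=1$, the statement reduces to the fact that any single game of Tri$_{R(3)}(3) = $ Tri$_{6}(3)$ produces a monochromatic triangle, which is exactly the defining property of $R(3)$ and was already recorded as the solution to Problem~2 for $n=3$.

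For the inductive step, suppose the conclusion holds for $k$ and consider $k+1$ games played on a board with $R^{k+1}(3) = R(R^{k}(3))$ vertices. I would focus first on the coloring produced by game number $1$. By the defining property of the Ramsey number $R(R^{k}(3))$, this single coloring must contain a monochromatic complete subgraph on $R^{k}(3)$ vertices; call its vertex set $V_{1}$. I then restrict attention to the sub-board induced by $V_{1}$. The remaining $k$ games, when restricted to $V_{1}$, constitute $k$ edge 2-colorings of $K_{R^{k}(3)}$, so by the induction hypothesis they share a common monochromatic triangle $T \subseteq V_{1}$. Since every edge inside $V_{1}$ is monochromatic for game $1$ by choice of $V_{1}$, the triangle $T$ is also monochromatic in game $1$, hence in all $k+1$ games simultaneously.

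The only subtlety worth naming is bookkeeping with the iterated notation: one must match $R^{k+1}(3) = R(R^{k}(3))$ correctly so that a single application of the classical Ramsey bound produces a subboard of size exactly $R^{k}(3)$, on which the inductive hypothesis applies verbatim. There is no genuine obstacle here beyond making sure the induction is set up so that each step consumes exactly one game and exactly one outer layer of the iterated Ramsey function; once that is in place, the argument is simply iterated pigeonholing and costs nothing further.
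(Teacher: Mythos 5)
Your proof is correct and follows essentially the same route as the paper: repeatedly apply the defining property of the Ramsey number to peel off one game at a time, restricting the remaining colorings to the monochromatic subgraph just obtained. The paper phrases this as an iterative ``continuing this way for $k$ steps'' argument rather than a formal induction, but the underlying idea and the bookkeeping with $R^{k+1}(3)=R(R^{k}(3))$ are identical.
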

\begin{proof}
By the definition of Ramsey number there is complete graph with $R^{k-1}(3)$ vertices that is monochromatic for the first game. Restrict the second game to this complete subgraph of the game board. Again by the definition of Ramsey number there is a complete subgraph of this graph with $R^{k-2}(3)$ vertices that is monochromatic for the first and second games. Continuing this way for $k$ steps we obtain a complete graph with three vertices that is monochromatic for all $n$ games.
\end{proof}

\section{Combinatorics of triangular sets} \label{Section3}
 A \emph{triangular number} is a number that can be represented by a triangular arrangement of equally spaced points. For example, the number 15 can be arranged into a triangle with five levels (see Figure \ref{T5}).
\begin{figure}[h]
\label{T5}
	\begin{center}
		\begin{tikzpicture}
		\foreach \x/\y in {12/0, 13/0, 14/0, 15/0, 16/0, 12.5/0.75, 13.5/0.75, 14.5/0.75, 15.5/0.75, 13/1.5, 14/1.5, 15/1.5, 13.5/2.25, 14.5/2.25, 14/3}
		{
			\node[circle, fill=black, inner sep=0pt, minimum size=5pt] at (\x,\y) {};
		}
		\end{tikzpicture}
	\end{center}
	\caption{Triangular arrangement of 15 points.}
\end{figure}
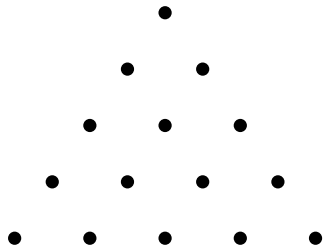\\

For this reason, 15 is called a triangular number. The first four triangular numbers are 1, 3, 6, and 10 whose arrangements are given in Figure \ref{first 4 triangular}. If $T_{n}$ denotes the $n^{\text{th}}$ triangular number then by construction $T_{n+1} = T_{n}+n+1$ with $T_{1} = 1$. It is well known that these numbers can be represented as follows,  $$T_n = \sum\limits_{i=1}^n i = \frac{n(n+1)}{2}={n+1 \choose 2}.$$
\begin{figure}[h]\label{Triangular numbers}
	\begin{center}
		\begin{tikzpicture}
		\foreach \x/\y in {0/0, 1.5/0, 2.5/0, 2/0.75, 4/0, 5/0, 6/0, 4.5/0.75, 5.5/0.75, 5/1.5, 7.5/0, 8.5/0, 9.5/0, 10.5/0, 8/0.75, 9/0.75, 10/0.75, 8.5/1.5, 9.5/1.5, 9/2.25}
		{
			\node[circle, fill=black, inner sep=0pt, minimum size=5pt] at (\x,\y) {};
		}
		\end{tikzpicture}
	\end{center}
	\caption{Triangular arrangement of 1, 3, 6, and 10 points}
	\label{first 4 triangular}
\end{figure}
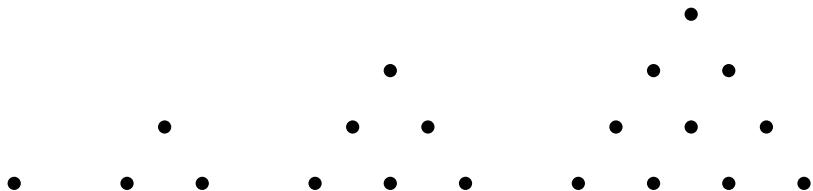\\

	Let $\mathbb{N}=\{1,2,3,\dots\}$ denote the set of natural numbers. A subset $X$ of $\mathbb{N}$ is \emph{triangular} if $|X|$ is a triangular number. Let $\{x_1, x_2, x_3,\ldots, x_{T_n}\}$ be an increasing enumeration of $X$. The numbers in $X$ can be naturally arranged into a triangle with $n$ levels as shown in Figure \ref{levels}.
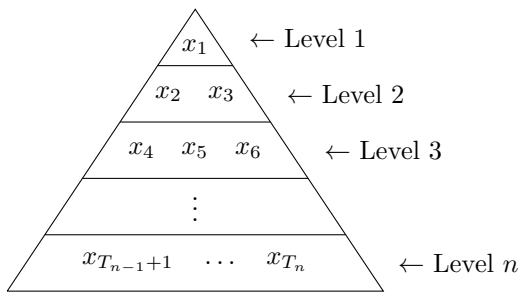
\begin{figure}[h] 
	\begin{center}
		\begin{tikzpicture}
		\draw (2.5,3.25) node{$x_1$};
		\draw (2.5,2.625) node{$x_2\quad x_3$};
		\draw (2.5,1.875) node{$x_4\quad x_5\quad x_6$};
		\draw (2.5,1.2) node{$\vdots$};
		\draw (2.5,0.375) node{$x_{T_{n-1}+1}\quad \ldots\quad x_{T_n}$};
		\draw (0,0) -- (2.5,3.75) -- (5,0) -- (0,0);
		\draw (2,3) -- (3, 3);
		\draw (1.5,2.25) -- (3.5,2.25);
		\draw (1,1.5) -- (4,1.5);
		\draw (0.5,0.75) -- (4.5,0.75);
		\draw (4,3.375) node{$\leftarrow$ Level 1};
		\draw (4.5,2.625) node{$\leftarrow$ Level 2};
		\draw (5,1.875) node{$\leftarrow$ Level 3};
		\draw (6,0.375) node{$\leftarrow$ Level $n$};
		\end{tikzpicture}
	
	\end{center}
	\caption{Triangular arrangement of $\{x_1, x_2, x_3,\ldots, x_{T_n}\}$}
		\label{levels}
\end{figure}
	
	Let $\triangle$ denote the collection of all triangular subsets of $\mathbb{N}$. For $k\in \mathbb{N}$, let $\triangle_k$ denote the triangular sets with $k$ levels \emph{i.e.} those subsets of $\mathbb{N}$ such that $|X|=T_k$. The next partial order is an adaptation of the order on $\mathcal{R}_{1}$ considered by 
Dobrinen and Todorcevic in \cite{DandT} to our current setting. It can be seen as a restriction (to triangular sets) of the partial order used by Laflamme, which inspired the work in \cite{DandT}, to study complete combinatorics in \cite{CompleteCombinatorics}.
\begin{definition}
	For $X,Y\in\triangle$, $X \leqslant Y$ means that $X \subseteq Y$ and every level of $X$ is contained in a single distinct level of $Y$.
\end{definition}

	For example, if we let $W=\{1,2,3,4,5,6\}$, $X=\{2,4,5\}$, $Y=\{3,5,6\}$ and $Z=\{5\}$ then $W\in\triangle_3$, $X,Y\in\triangle_2$ and $Z\in\triangle_1$ such that $Z\subseteq X,Y$ and $X,Y \subseteq W$. Each level of $X$ and $Y$ are contained in a distinct single level of $W$ and $Z$, being only one element, is contained in $X$ and $Y$. Figure \ref{diamond example config} displays this configuration and the associated Hasse diagram in the partial order $(\triangle,\le)$.
	\begin{figure}[h]
		\begin{center}
			\begin{tikzpicture}
			\draw (1.5,1.75) node{$1$};
			\draw (1,1) node{$2$} (2,1) node{$3$};
			\draw (0.5,0.25) node{$4$} (1.5,0.25) node{$5$} (2.5,0.25) node{$6$};
			\draw (-0.2,-0.1) -- (1.5,2.4) -- (3.2,-0.1) -- cycle;
			\draw (0,0) -- (1,1.5) -- (2,0) -- cycle (3,0) -- (1,0) -- (2,1.5) -- cycle (1.1,0.05) -- (1.5,0.675) -- (1.9,0.05) -- cycle;
			\draw (1.5,2.75) node{$W$} (0,0.75) node{$X$} (3,0.75) node{$Y$} (1.5,-0.5) node{$Z$};
			\end{tikzpicture} \hspace{1cm}
				\begin{tikzpicture}[scale=1.75]
			\node (top) at (0,0) {$Z$};
			\node (left) at (-1,1)  {$X$};
			\node (right) at (1,1) {$Y$};
			\node (bottom) at (0,2) {$W$};
			\draw (top) -- (left) -- (bottom) -- (right) -- (top);	
		\end{tikzpicture}

		\end{center}
		\caption{Hasse diagram and possible configuration for $W,X,Y$ and $Z$\label{diamond example config}}
	\end{figure}
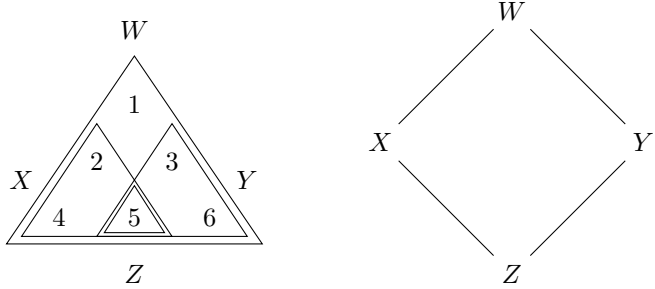

\begin{definition}
	For $k \in \mathbb{N}$ and $X \in \triangle$, let $\triangle_k(X) =\{Y \in \triangle_k : Y \leq X\}$.
\end{definition}

  Suppose $X \in \triangle_3$ and let $\{x_1, x_2, x_3,x_4, x_5, x_6\}$ be an increasing enumeration of $X$. Then $\triangle_2(X)$ contains the elements $\{x_1,x_2,x_3\}$, $\{x_1,x_4,x_5\}$, $\{x_1,x_4,x_6\}$, $\{x_1,x_5,x_6\}$, $\{x_2,x_4,x_5\}$, $\{x_2,x_4,x_6\}$, $\{x_2,x_5,x_6\}$, $\{x_3,x_4,x_5\}$, $\{x_3,x_4,x_6\}$, and $\{x_3,x_5,x_6\}$. Thus for any $X\in \triangle_{3}$, $|\triangle_2(X)|=10$. 

Note that if $Y\in \triangle_4$ then $|\triangle_3(Y)|=41$. To see this, first note that there are ${4\choose 3}=4$ ways to choose three elements from the last row of $Y$. Each one of these possibilities can be added onto any element of $\triangle_2(Y')$ where $Y'$ is the element of $\triangle_3$ obtained by removing the last level of $Y$ to obtain a distinct element of $\triangle_3(Y)$. In particular there are ${4\choose 3}\cdot|\triangle_2(X)|=40$ elements of $\triangle_{3}(Y)$ obtained this way. The only other element of $\triangle_3(Y)$ is $Y'$. So $|\triangle_3(Y)|={4\choose 3}\cdot|\triangle_2(Y')|+1=41$. By a similar argument, one can show that for all $k\in \mathbb{N}$ and for all $Y \in \triangle_k$, if $Y'\in\triangle_{k-1}$ then $$|\triangle_{k-1}(Y)|=1+k|\triangle_{k-2}(Y')|.$$ 

To better express these types of combinatorial relationships we introduce a variant of the binomial coefficient ${n \choose m}$. The next definition should be contrasted with the recursive definition of the binomial coefficients using Pascal's triangle.
\begin{definition}
$$\begin{cases}
&\displaystyle {n \brack k} =\displaystyle {n-1 \brack k} + {n \choose k}{n-1 \brack k-1}\\ \\
&\displaystyle {n \brack 0} =1,\  {n \brack n} = 1
\end{cases}$$
\end {definition}
With this definition note that for all natural numbers $k$, $$ {k \brack k-1} = {k-1 \brack k-1}+{k \choose k-1}\displaystyle {k-1 \brack k-2} =1 + k\displaystyle {k-1 \brack k-2}.$$ Since $\displaystyle {2 \brack 1} =1,$ the argument in the previous paragraph implies that for all natural numbers $k$ and for all $Y \in \triangle_k$, $|\triangle_{k-1}(Y)| = { k \brack k-1}$ as they both satisfy the same recursive formula. The next Theorem generalizes this result.

\begin{theorem}\label{counting}
If $k<n$ and $X\in \triangle_{n}$ then $\displaystyle {n \brack k} =|\triangle_{k}(X)|$. That is, $\displaystyle {n \brack k}$ counts the number of $\triangle_k$'s in a given $\triangle_n$.
\end{theorem}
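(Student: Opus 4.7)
The plan is induction on $n$, showing that $|\triangle_k(X)|$ satisfies the same recursion as ${n \brack k}$. For the base case $n=2$, $k=1$: any $X \in \triangle_2$ has three elements, and $\triangle_1(X)$ consists of its three singletons, matching ${2 \brack 1} = {1 \brack 1} + {2 \choose 1}{1 \brack 0} = 1 + 2 = 3$ by the recursion and the boundary conditions ${1 \brack 1} = {1 \brack 0} = 1$.

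For the inductive step, fix $n \geq 3$ and $1 \leq k \leq n-1$, and let $X \in \triangle_n$. Write $L$ for the bottom (level-$n$) row of $X$, which has $n$ elements, and set $X' = X \setminus L \in \triangle_{n-1}$. The key step is to partition $\triangle_k(X)$ according to whether an element $Y$ meets $L$. Because the $k$ levels of $Y$ lie in distinct levels of $X$ and the natural increasing enumeration places all of $L$ after all of $X'$, the only level of $Y$ that can meet $L$ is its bottom level. This yields two classes. Class (i) consists of those $Y$ disjoint from $L$; these are precisely the elements of $\triangle_k(X')$. Class (ii) consists of those $Y$ whose bottom level is a $k$-subset $B \subseteq L$, in which case $Y \setminus B$ is an element of $\triangle_{k-1}(X')$, and conversely any pairing of a $k$-subset $B$ of $L$ with an element $Y^- \in \triangle_{k-1}(X')$ reconstructs a unique $Y = B \cup Y^- \in \triangle_k(X)$.

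Counting the two classes gives $|\triangle_k(X)| = |\triangle_k(X')| + {n \choose k}|\triangle_{k-1}(X')|$. Applying the inductive hypothesis to $X' \in \triangle_{n-1}$, with the edge case $k = n-1$ handled by observing that $\triangle_{n-1}(X') = \{X'\}$ (the $n-1$ levels of any $Y \leq X'$ must sit in distinct levels of $X'$ of matching sizes, forcing $Y = X'$), so $|\triangle_{n-1}(X')| = 1 = {n-1 \brack n-1}$, yields $|\triangle_k(X)| = {n-1 \brack k} + {n \choose k}{n-1 \brack k-1}$, which is exactly the defining recursion for ${n \brack k}$. The only delicate point is the bijection underlying Class (ii): the pair $(B, Y^-) \mapsto B \cup Y^-$ must land in $\triangle_k(X)$ with $B$ as the bottom level, and this holds because every element of $X'$ precedes every element of $L$ in $\mathbb{N}$, so $B$ automatically becomes the largest level. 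No significant obstacle is expected beyond this routine verification.
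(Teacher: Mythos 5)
Your proof is correct and follows essentially the same route as the paper's: both arguments establish the recursion $|\triangle_k(X)| = |\triangle_k(X')| + \binom{n}{k}|\triangle_{k-1}(X')|$ by splitting the $\triangle_k$'s in $X$ according to whether their bottom level lies in the last level of $X$, and then match base cases with those of ${n \brack k}$. Your version is somewhat more explicit about the bijection in the second class and about the edge cases $k=n-1$ (and implicitly $k=1$ via $\triangle_0$), but the decomposition and the counting are identical to the paper's.
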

	\begin{proof} Let $k<n$ and $X\in \triangle_{n}$. We show that $|\triangle_{k}(X)|$ satisfies the same recursive definition as ${n \brack k}$. It is clear that $|\triangle_{n}(X)| =1$. If we consider $\emptyset$ to be the only element of $\triangle_{0}$ then $|\triangle_{0}(X)|= 1$. Thus the base cases of the recursions are the same. We complete the proof by verifying that $|\triangle_{k}(X)| = |\triangle_{k}(X')| + {n \choose k}|\triangle_{k-1}(X')|$ where $X'$ is the triangular set in $\triangle_{n-1}(X)$ obtained by removing the last level of $X$.
	
	Note that	$|\triangle_{k}(X)|$ can be thought of as the number of $\triangle_k$'s in a $\triangle_n$. It should be clear that $|\triangle_{k}(X)| = |\triangle_{k}(X')| + c$ where $c$ is the amount of new $\triangle_k's$ formed when the last level of $X$ is added back to $X'$. Each $\triangle_{k}$ contributing to $c$ must have its last level in the last level of $X$. There are ${n \choose k}$ possibilities for those $k$ points in the final level of $X$. For each collection of $k$ points in the last level of $X$ there are $|\triangle_{k-1}(X')|$ possibilities for triangular sets in $\triangle_{k}(X)$ whose last level is the given $k$ points. Therefore $c = {n \choose k}|\triangle_{k-1}(X')|$ and $|\triangle_{k}(X)| = |\triangle_{k}(X')| + {n \choose k}|\triangle_{k-1}(X')|$.
	\end{proof}
	 Later in the paper we use the next corollary to obtain estimates for upper and lower bounds on ${n \brack k}$. These estimates are needed to apply the probabilistic method to our combinatorial game and obtain lower bounds on triangular Ramsey numbers.
\begin{corollary} \label{brack sums} For $0<k<n$, $\displaystyle {n \brack k} = \sum\limits_{0<i_1<\cdots<i_k\leq{n}} \left( \prod\limits_{j=1}^k {i_j \choose j} \right).$
	\begin{proof} We show that $\sum\limits_{0<i_1<\cdots<i_k\leq{n}}\left( \prod\limits_{j=1}^k {i_j \choose j} \right)$ satisfies the same recursive definition as ${n \brack k}$. By the previous theorem, $\sum\limits_{0<i_1\leq{n}} {i_1 \choose 1} =T_{n}={n \brack 1}$. Clearly, $\sum\limits_{0<i_1<\cdots<i_n\leq{n}} \left( \prod\limits_{j=1}^n {i_j \choose j} \right) = {1 \choose 1}{2 \choose 2}\cdots{n \choose n} = 1={n \brack n}$. Thus the base case of the two recursions are the same.
	Note that
	\begin{align*}&\sum\limits_{0<i_1<\cdots<i_k\leq{n-1}} \left( \prod\limits_{j=1}^k {i_j \choose j} \right) + {n \choose k}
		\cdot\sum\limits_{0<i_1<\cdots<i_{k-1}\leq{n-1}} \left(  \prod\limits_{j=1}^{k-1} {i_j \choose j} \right)\\
		 &=\sum\limits_{0<i_1<\cdots<i_k\leq{n-1}} \left( \prod\limits_{j=1}^k {i_j \choose j} \right) +  \sum\limits_{0<i_1<\cdots<i_{k-1}\leq{n-1}, i_k=n} \left( \prod\limits_{j=1}^{k} {i_j \choose j} \right). \end{align*} The right hand side of the previous equation is just the sum $\sum\limits_{0<i_1<...<i_k\leq{n}} \left( \prod\limits_{j=1}^k {i_j \choose j} \right)$ broken into the parts where $i_k = n$ in the second sum and where it isn't in the first sum, so the equality holds. In particular, the formula satisfies the same recursion formula as ${ n \brack k}$.
	\end{proof}
\end{corollary}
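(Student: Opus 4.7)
The plan is to prove the identity by induction, showing that the expression
$$S(n,k) = \sum\limits_{0<i_1<\cdots<i_k\leq n} \prod_{j=1}^k {i_j \choose j}$$
satisfies the same boundary conditions and the same recurrence that define ${n \brack k}$. Since the bracket coefficients are completely determined by ${n \brack 0}={n \brack n}=1$ together with ${n \brack k} = {n-1 \brack k} + {n \choose k}{n-1 \brack k-1}$, matching these three properties will force agreement on the stated range $0<k<n$.

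First I would handle the boundary values. When $k=n$, the only strictly increasing chain $0 < i_1 < \cdots < i_n \le n$ is $i_j = j$, so the sum collapses to ${1 \choose 1}{2 \choose 2}\cdots{n \choose n} = 1 = {n \brack n}$. When $k=1$, the sum is $\sum_{i=1}^{n} {i \choose 1} = T_n$, and unfolding the bracket recursion from ${1 \brack 1}=1$ (or invoking Theorem~\ref{counting} with $|\triangle_1(X)|$ equal to the number of points in $X\in\triangle_n$) gives ${n \brack 1} = T_n$ as well. These two cases serve as the anchors for the induction.

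The core of the argument is the inductive step, where I would verify that $S(n,k)=S(n-1,k)+{n\choose k}S(n-1,k-1)$ for $0<k<n$. To do this I would split the outer sum according to whether the largest index satisfies $i_k\le n-1$ or $i_k=n$. The first piece is, by definition, $S(n-1,k)$. In the second piece the factor ${i_k \choose k} = {n \choose k}$ pulls out of every term, leaving exactly $\sum_{0<i_1<\cdots<i_{k-1}\le n-1}\prod_{j=1}^{k-1}{i_j\choose j} = S(n-1,k-1)$. Summing the two pieces matches the bracket recurrence exactly, and induction closes the argument.

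The main obstacle, such as it is, is purely bookkeeping: one must be careful that the split-off piece with $i_k=n$ genuinely produces the sum $S(n-1,k-1)$ with the correct index range $0<i_1<\cdots<i_{k-1}\le n-1$, and one must verify that the base cases $k=1$ and $k=n$ jointly cover enough starting data for the two-dimensional induction to propagate (which they do, since every pair $(n,k)$ with $0<k<n$ is reached from such boundary pairs by repeatedly decrementing $n$ via the recurrence).
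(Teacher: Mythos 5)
Your proposal is correct and follows essentially the same route as the paper's own proof: both verify the boundary cases $k=1$ (giving $T_n$) and $k=n$ (giving $1$), and both establish the recurrence $S(n,k)=S(n-1,k)+{n\choose k}S(n-1,k-1)$ by splitting the sum according to whether $i_k=n$ and pulling the factor ${n\choose k}$ out of the $i_k=n$ piece. Your write-up is if anything slightly more careful about why the two boundary families suffice to anchor the two-dimensional induction.
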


\section{The game of Mines} \label{Section4} The most basic variant of the game of Mines, denoted by Mines$_{3}$, is played on a game board of size $\triangle_3$ with two players each assigned one of two markings and/or colors. Two example game boards are given in Figure \ref{minesgameboard} where the numbers represent the positions that can be marked.  The players have alternating turns in which they may choose to mark one position on the game board or none at all. The game ends when all positions have been played.

\begin{definition}
	Let $X$ denote the set of moves made by player one and $Y$ the set of moves made by player two. Let $x_m$ denote a move done by player one on position $m$ and $y_m$ a move done by player two on position $m$.
\end{definition}
At any point in the game we have $X\cap Y=\emptyset$. If both players always choose to mark a position then given that player one goes first we also have $|X|=|Y|-1$ or $|X|=|Y|$, and $|X|+|Y|=T_3$ when all positions have been played.

\begin{figure}[h]
	\begin{center}
		\begin{tikzpicture}[scale=1.5]
		\draw (1.5,2) node{$1$};
		\draw (1,1.25) node{$2$} (2,1.25) node{$3$};
		\draw (0.5,0.5) node{$4$} (1.5,0.5) node{$5$} (2.5,0.5) node{$6$};
		\draw (0,0.25) -- (1.5,2.5) -- (3,0.25) -- (0,0.25); %Triangular Border
		\draw[fill=gray] (1,0.25) -- (1.5,1) -- (0.5,1) -- cycle (1.5,1) -- (2,0.25) -- (2.5, 1) -- cycle (1.5,1) -- (2,1.75) -- (1,1.75) -- cycle;
		\end{tikzpicture}
		\hspace{1.25cm}
		\begin{tikzpicture}[scale=0.75]
		\path[use as bounding box] (-2.5,-5) rectangle (2, 0);
		\begin{scope}	
		\draw (0,-0.6) node{1} (-1.3,-2.1) node{2} (1.3,-2.1) node{3} (-2,-4.1) node{4} (0,-4.5) node{5} (2,-4.1) node{6};
		\begin{scope}
		\foreach \x/\y in {1/-0.5, -1/-0.5, 0/1}
		{\clip (\x,\y) arc(120:180:5) arc(-120:-60:5) arc(0:60:5);}
		\fill[gray] (0,0) arc(120:180:5) arc(-120:-60:5) arc(0:60:5);
		\end{scope}
		\begin{pgfinterruptboundingbox}
		\path[clip,draw] (0,0) arc(120:180:5) arc(-120:-60:5) arc (0:60:5);
		\end{pgfinterruptboundingbox}
		\foreach \x/\y in {0/0, 0/1,1/-0.5,-1/-0.5}		
		{\draw (\x,\y) arc(120:180:5) arc(-120:-60:5) arc(0:60:5);}	
		\end{scope}
		\end{tikzpicture}
	\caption{Two alternative gameboards for Mines$_3$.}
	\label{minesgameboard}
	\end{center}
\end{figure}
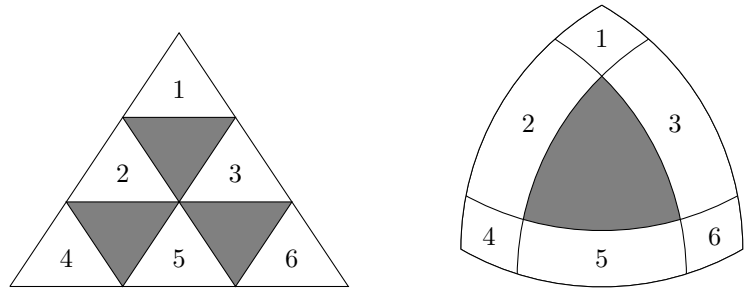

 Player one \emph{wins} if $\triangle_{2}(X)\not=\emptyset$. Player two \emph{wins} if $\triangle_{2}(Y)\not=\emptyset$.  A \emph{draw} occurs if all positions have been played and neither player has won the game. That is, $\triangle_2(X)=\triangle_2(Y)=\emptyset$ and $|X|+|Y|=T_3$. 

Figure \ref{ExampleGame} describes a possible game such that $X=\{x_2,x_3,x_4\}$ and $Y=\{y_1,y_5,y_6\}$. Here $\triangle_2(Y)\not=\emptyset$ since $\{y_1,y_5,y_6\}\in\triangle_{2}$ indicating that player two has won the game.
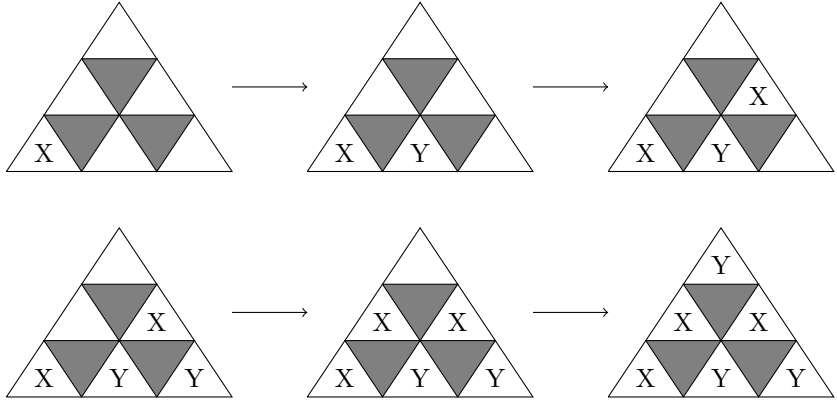
\begin{figure}[h] 
	\begin{center}
		\begin{tikzpicture} %Extraordinarily inefficient code; needs revision!
		\draw (0.5,0.25) node{X};
		\draw (0,0) -- (1.5,2.25) -- (3,0) -- (0,0);
		\draw[fill=gray] (1,0) -- (1.5,0.75) -- (0.5,0.75) -- cycle (1.5,0.75) -- (2,0) -- (2.5, 0.75) -- cycle (1.5,0.75) -- (2,1.5) -- (1,1.5) -- cycle;
		
		\draw (4.5,0.25) node{X} (5.5,0.25) node{Y};
		\draw (4,0) -- (5.5,2.25) -- (7,0) -- (4,0);			
		\draw[fill=gray] (5,0) -- (5.5,0.75) -- (4.5,0.75) -- cycle (5.5,0.75) -- (6,0) -- (6.5, 0.75) -- cycle (5.5,0.75) -- (6,1.5) -- (5,1.5) -- cycle;
		
		\draw (10,1) node{X} (8.5,0.25) node{X} (9.5,0.25) node{Y};
		\draw (8,0) -- (9.5,2.25) -- (11,0) -- (8,0);			
		\draw[fill=gray] (9,0) -- (9.5,0.75) -- (8.5,0.75) -- cycle (9.5,0.75) -- (10,0) -- (10.5, 0.75) -- cycle (9.5,0.75) -- (10,1.5) -- (9,1.5) -- cycle;	
			
		\draw (2,-2) node{X} (0.5,-2.75) node{X} (1.5,-2.75) node{Y} (2.5,-2.75) node{Y};
		\draw (0,-3) -- (1.5,-0.75) -- (3,-3) -- (0,-3);
		\draw[fill=gray] (1,-3) -- (1.5,-2.25) -- (0.5,-2.25) -- cycle (1.5,-2.25) -- (2,-3) -- (2.5, -2.25) -- cycle (1.5,-2.25) -- (2,-1.5) -- (1,-1.5) -- cycle;
		
		\draw (5,-2) node{X} (6,-2) node{X} (4.5,-2.75) node{X} (5.5,-2.75) node{Y} (6.5,-2.75) node{Y};
		\draw (4,-3) -- (5.5,-0.75) -- (7,-3) -- (4,-3);
		\draw[fill=gray] (5,-3) -- (5.5,-2.25) -- (4.5,-2.25) -- cycle (5.5,-2.25) -- (6,-3) -- (6.5, -2.25) -- cycle (5.5,-2.25) -- (6,-1.5) -- (5,-1.5) -- cycle;
		
		\draw (9.5,-1.25) node{Y} (9,-2) node{X} (10,-2) node{X} (8.5,-2.75) node{X} (9.5,-2.75) node{Y} (10.5,-2.75) node{Y};
		\draw (8,-3) -- (9.5,-0.75) -- (11,-3) -- (8,-3);
		\draw[fill=gray] (9,-3) -- (9.5,-2.25) -- (8.5,-2.25) -- cycle (9.5,-2.25) -- (10,-3) -- (10.5, -2.25) -- cycle (9.5,-2.25) -- (10,-1.5) -- (9,-1.5) -- cycle;
		
		\draw [->] (3,1.125) -- (4,1.125);
		\draw [->] (7,1.125) -- (8,1.125);
		\draw [->] (3,-1.875) -- (4,-1.875);
		\draw [->] (7,-1.875) -- (8,-1.875);
		\end{tikzpicture}
	\end{center}
	\caption{An example game of Mines$_3$.}
	\label{ExampleGame}
\end{figure}
\begin{theorem}
	Both players cannot construct a $\triangle_2$. That is, it is impossible for both $\triangle_2(X)\not=\emptyset$ and $\triangle_2(Y)\not=\emptyset$.\label{Mines3 no draw 1}
	\end{theorem}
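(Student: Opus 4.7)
The plan is to reduce the theorem to a small case analysis on how a $\triangle_2$ can sit inside the $\triangle_3$ gameboard. First I would observe that every $\triangle_2$ has exactly three elements whose level profile in the ambient $\triangle_3$ (the triple $(\ell_1,\ell_2,\ell_3)$ counting how many of its elements lie in each of levels $1,2,3$ of the $\triangle_3$) is one of exactly three possibilities, corresponding to the three ways to choose which level of $\triangle_3$ contains the apex of the $\triangle_2$ and which later level contains its base: namely $(1,2,0)$, $(1,0,2)$, or $(0,1,2)$.

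Next I would use disjointness and a counting argument. Since $X$ and $Y$ are disjoint and a $\triangle_2$ has three elements, the hypotheses $\triangle_2(X)\neq\emptyset$ and $\triangle_2(Y)\neq\emptyset$ force $|X|\geq 3$ and $|Y|\geq 3$, and thus $|X|=|Y|=3$ with $X\cup Y$ equal to all six positions of the $\triangle_3$. In particular, the level profiles of $X$ and $Y$ must sum coordinatewise to the profile $(1,2,3)$ of the full gameboard.

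I would then finish by checking each of the three possible $\triangle_2$ profiles for $X$ and verifying that the corresponding complementary profile for $Y$ is not among the three valid $\triangle_2$ profiles:
\begin{align*}
(1,2,0) \text{ for }X &\;\Longrightarrow\; (0,0,3) \text{ for }Y, \\
(1,0,2) \text{ for }X &\;\Longrightarrow\; (0,2,1) \text{ for }Y, \\
(0,1,2) \text{ for }X &\;\Longrightarrow\; (1,1,1) \text{ for }Y.
\end{align*}
None of $(0,0,3)$, $(0,2,1)$, $(1,1,1)$ equals $(1,2,0)$, $(1,0,2)$, or $(0,1,2)$, so $Y$ cannot contain a $\triangle_2$, a contradiction.

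I do not expect any serious obstacle: the argument is entirely combinatorial and finite, and the only mild subtlety is being careful that a $\triangle_2$ in the sense of Definition 1 requires the apex level to lie strictly above (i.e., earlier than) the base level inside the ambient $\triangle_3$, which is exactly what rules out the ``upside down'' profile $(0,2,1)$ in the middle case above.
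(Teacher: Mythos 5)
Your proof is correct, but it is organized differently from the paper's. The paper pigeonholes on the largest row of the board: that row has three positions while the base of a $\triangle_2$ needs two in a single row, so one player, say player one, owns two of positions $4,5,6$; this forces player two's base to be $\{2,3\}$, which in turn forces $x_1\in X$ (the only remaining apex for player one's triangle), leaving player two with no apex and hence no $\triangle_2$. You instead observe that disjointness forces $|X|=|Y|=3$, so each player's entire move set must itself be a $\triangle_2$, and then you check that the coordinatewise complement in $(1,2,3)$ of each of the three admissible level profiles $(1,2,0)$, $(1,0,2)$, $(0,1,2)$ is inadmissible. Your version is arguably cleaner for this particular theorem --- three one-line arithmetic checks --- and you correctly isolate the one real subtlety, namely that the apex level must lie strictly above the base level, which is exactly what rules out $(0,2,1)$ and $(1,1,1)$. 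The trade-off is that your reduction to ``each player's set is exactly a $\triangle_2$'' hinges on the coincidence $2T_2=T_3$; for the analogous statement about Mines$_{5}(3)$ proved later in the paper one has $2T_3 = 12 < 15 = T_5$, so the players' sets are no longer forced to be triangles themselves, and it is the paper's pigeonhole-on-the-largest-row strategy, not the profile-complementation trick, that generalizes.
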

	\begin{proof}
		Assume this is not the case. In other words, there exists a situation such that $\triangle_2(X)\not=\emptyset$ and $\triangle_2(Y)\not=\emptyset$. Without loss of generality, we may assume that all positions have been played.
		
		The largest row of the game board has 3 positions and the largest level of a $\triangle_2$ has 2 positions. Therefore, both $X$ and $Y$ cannot construct their $\triangle_2$'s largest level in the same row of the game board. By the pigeon hole principle there exists $a,b \in \{4,5,6\}$ such that $x_a,x_b\in X$ or $y_a,y_b\in Y$. If $x_a,x_b\in X$ then $y_2,y_3\in Y$. In order for $\triangle_{2}(X)\not=\emptyset$ we must have $x_1\in X$ and we find $\triangle_{2}(Y)=\emptyset$, a contradiction. If instead we had $y_a,y_b\in Y$ then $x_2,x_3\in X$. Once again, in order for $\triangle_{2}(X)\not=\emptyset$ we must have $x_1\in X$ and we find $\triangle_3(Y)=\emptyset$, a contradiction. Therefore it is impossible for both players to construct a $\triangle_2$.
	\end{proof}

\begin{theorem}
Mines$_{3}$ never ends in a draw.
\label{Mines3 no draw 2}
\end{theorem}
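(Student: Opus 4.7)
The plan is to argue by contradiction. I would suppose some play of Mines$_{3}$ ends in a draw, so that $X\cup Y=\{1,2,3,4,5,6\}$, $X\cap Y=\emptyset$, and $\triangle_{2}(X)=\triangle_{2}(Y)=\emptyset$. Since swapping the roles of the two players preserves the hypotheses, I would assume without loss of generality that $1\in X$. The structural fact I plan to exploit is that, within the $\triangle_{3}$ board, every element of $\triangle_{2}$ takes one of two forms: either $\{1,2,3\}$, or $\{t,a,b\}$ with $t\in\{1,2,3\}$ and $\{a,b\}\subseteq\{4,5,6\}$. In particular, position $1$ completes a $\triangle_{2}$ together with any two positions from the bottom row, and any position in $\{2,3\}$ also completes a $\triangle_{2}$ with any two positions from the bottom row.

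Next I would apply the pigeonhole principle to the bottom row $\{4,5,6\}$. These three positions are split between $X$ and $Y$, so at least two of them lie in a common class. If two of them, say $a,b$, lie in $X$, then since $1\in X$ we obtain $\{1,a,b\}\in\triangle_{2}(X)$, contradicting the draw assumption. Hence I may assume at least two elements $a,b\in\{4,5,6\}$ lie in $Y$.

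From there I would argue that $\{2,3\}\subseteq X$: if either of $2$ or $3$ were in $Y$, then together with $a$ and $b$ it would form a set in $\triangle_{2}(Y)$, again contradicting the draw assumption. But then $\{1,2,3\}\subseteq X$, and since $\{1,2,3\}\in\triangle_{2}$, this yields $\triangle_{2}(X)\neq\emptyset$, the final contradiction.

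There is no serious obstacle here; the entire argument is a two-case pigeonhole analysis. The main point to pin down is the observation that the apex position $1$ is so powerful---it completes a $\triangle_{2}$ with essentially any pair of other board positions---that whichever player owns it either builds a $\triangle_{2}$ directly from the bottom row or forces the opponent to build one. This parallels the case analysis in the proof of Theorem~\ref{Mines3 no draw 1}, but is arranged to produce the dual conclusion.
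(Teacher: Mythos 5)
Your proof is correct and follows essentially the same route as the paper's: pigeonhole on the bottom row $\{4,5,6\}$, then observe that any position in the top two rows together with two same-class bottom positions forms a $\triangle_2$, with $\{1,2,3\}$ itself serving as the fallback $\triangle_2$ when the top two rows avoid the majority bottom class. The only difference is cosmetic — you run it as a contradiction with a WLOG normalization on the apex, where the paper argues directly.
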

\begin{proof}
Assume a full game of Mines$_{3}$ has been played. By the pigeonhole principle, at least two of the bottom three elements must be of the same color. If any of the three elements above the bottom row are of the same color as the two on the bottom, then a $\triangle_{2}$ has been constructed in that color. For this not to happen, the three elements in the top two rows must all be in the opposite color. If this is the case, then a $\triangle_{2}$ has been constructed in the opposite color. Therefore, it is impossible for a game of Mines$_{3}$ to be played in which neither player constructs a $\triangle_{2}$, and no game can be played in which both players construct a $\triangle_{2}$.
\end{proof}

An interesting variation of the game play exploits the fact that the game board has $120\degree$ rotational symmetry about its center. There are three directions to it given by the perpendicular from any of the three edges to its adjacent vertex. We let $D_n$ denote the orientation of the game board in the $n$ direction (see Figure \ref{directions}). We use the notation $\triangle_{k,n}(X)$ to denote all $\triangle_k$'s on the game board in the $D_{n}$ direction. For example, $\{1,3,4\}\in \triangle_{2,2}(X)$ but $\{1,3,4\}\not\in \triangle_{2,1}(X)$. Note that if $\triangle_{k,n}(X)\not=\emptyset$ then  there exists a $\triangle_k$ in the direction of $D_n$ contained in $X$. Likewise, if $\triangle_{k,n}(Y)\not=\emptyset$ then  there exists a $\triangle_k$ in the direction of $D_n$ contained in $Y$. The player that wins in two of the three directions wins this variation of the game.

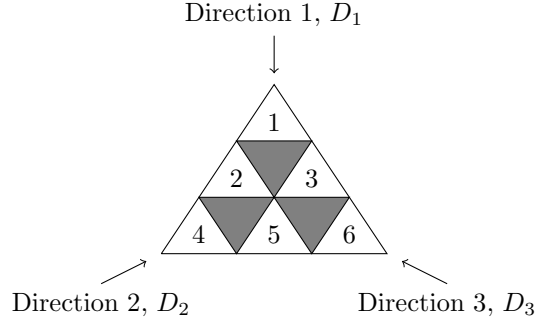
\begin{figure}[h] 
	\begin{center}
		\begin{tikzpicture}
		\draw (1.5,1.75) node{$1$};
		\draw (1,1) node{$2$} (2,1) node{$3$};
		\draw (0.5,0.25) node{$4$} (1.5,0.25) node{$5$} (2.5,0.25) node{$6$};
		\draw (0,0) -- (1.5,2.25) -- (3,0) -- (0,0); %Triangular Border
		\draw[fill=gray] (1,0) -- (1.5,0.75) -- (0.5,0.75) -- cycle (1.5,0.75) -- (2,0) -- (2.5, 0.75) -- cycle (1.5,0.75) -- (2,1.5) -- (1,1.5) -- cycle;
		\draw [->] (1.5,2.9) node[above] {Direction $1$, $D_{1}$} -- (1.5,2.4);
		\draw [->] (-0.8,-0.4) node[below] {Direction $2$, $D_{2}$} -- (-0.2,-.1);
		\draw [->] (3.8,-0.4) node[below] {Direction $3$, $D_{3}$} -- (3.2,-.1);
		\end{tikzpicture}
	\end{center}
	\caption{A game board for Mines$_{3}$ with directions $D_{1}$, $D_{2}$ and $D_{3}$ labeled.}
	\label{directions}
\end{figure}

By Theorem \ref{Mines3 no draw 1} and Theorem \ref{Mines3 no draw 2} we see that there can never be a draw in a single direction. In addition, given that there are three directions one player must win in at least two directions. Therefore this variation of Mines$_{3}$ can never end in a draw. The next Theorem is true for both variations of Mines$_{3}$. We give the proof for the omnidirectional case as it is more interesting.
\begin{theorem}
	Player one has a winning strategy for Mines$_{3}$.
	\end{theorem}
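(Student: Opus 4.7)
My plan is to exhibit an explicit winning strategy for Player 1 and verify it by short case analysis, exploiting the vertical reflection of the game board (which swaps $2 \leftrightarrow 3$ and $4 \leftrightarrow 6$ and fixes $1$ and $5$) to halve the casework. The strategy rests on an enumeration, to be obtained by directly computing $\triangle_{2,1}$, $\triangle_{2,2}$, and $\triangle_{2,3}$: among the twenty $3$-element subsets of $\{1,\ldots,6\}$, exactly four are simultaneously a $\triangle_{2}$ in all three directions, namely the four upward-pointing sub-triangles $\{1,2,3\}$, $\{1,4,6\}$, $\{2,4,5\}$, $\{3,5,6\}$, while six more are a $\triangle_{2}$ in exactly two directions. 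I call these ten sets \emph{winning sets}: a player who ends the game holding a winning set has won in at least two of the three directions.

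Player 1's opening move is position $1$. By the reflection symmetry, Player 2's possible replies fall into three cases, in each of which Player 1's response is dictated as follows: (a) if Player 2 plays $5$, Player 1 plays $4$; (b) if Player 2 plays $2$ (or, by reflection, $3$), Player 1 plays $4$ (or $6$); (c) if Player 2 plays $4$ (or $6$), Player 1 plays $2$ (or $3$). Inspecting the enumerated winning sets now shows that Player 2's final set $Y$ can be a winning set only through $\{3,5,6\}$ in case (a), through $\{3,4,6\}$ in case (c), or through no candidate at all in case (b), the last because every winning set containing $2$ also contains $1$ or $4$, both held by Player 1.

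To finish, I would check Player 2's turn-$4$ reply in cases (a) and (c) and exhibit a Player 1 turn-$5$ move claiming one of the uncommitted elements of the surviving target triangle, so that $Y$ fails to be a winning set. Applying Theorem \ref{Mines3 no draw 2} in each direction, the three direction-wins split without draws, and since Player 2 wins at most one direction, Player 1 wins at least two and therefore wins the game. The main obstacle I anticipate is confirming the blocking element is always available on turn $5$: this follows because Player 2 can claim at most one of the three target elements on turn $4$, leaving another uncommitted for Player 1 to take. Any Player 2 pass only grants Player 1 an extra move and cannot change the outcome.
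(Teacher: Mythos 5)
Your proof is essentially correct for the omnidirectional variant, but it follows a genuinely different route from the paper's. The paper's strategy is purely constructive: since player one makes two of the first three moves, they can always secure two of the three corners $\{1,4,6\}$; after rotating so that these are $4$ and $6$, player one then takes whichever of $1,2,3$ is available, and the resulting set $\{c,4,6\}$ is already a $\triangle_2$ in direction $D_1$ and in one of $D_2,D_3$ --- no blocking, no appeal to the no-draw theorems, and only a three-way case split at the end. Your strategy instead fixes the opening at position $1$, responds reactively, classifies the ten three-element sets that are $\triangle_2$'s in at least two directions (your enumeration --- four sets in all three directions, six in exactly two --- checks out against the level structures $\{1\},\{2,3\},\{4,5,6\}$, $\{4\},\{2,5\},\{1,3,6\}$, $\{6\},\{3,5\},\{1,2,4\}$), and shows player two can never complete one; Theorems \ref{Mines3 no draw 1} and \ref{Mines3 no draw 2}, applied per direction, then convert ``player two wins at most one direction'' into ``player one wins at least two.'' This buys a defensive argument that needs no claim about what player one actually builds, at the cost of more casework, dependence on the earlier theorems, and a still-sketched turn-4/turn-5 verification (though the observation that player two can claim at most one of the remaining target elements before player one blocks does close it). One substantive difference worth flagging: the paper remarks that the theorem holds for both variations of Mines$_3$, and its strategy does win the basic single-direction game, whereas yours does not --- in the line $1,5,4,6,3,2$ your blocking move forces $X=\{1,3,4\}$ and $Y=\{2,5,6\}$, and $\{2,5,6\}$ is a $\triangle_2$ in direction $D_1$, so player two would win the basic variant even though player one wins the omnidirectional game $2$--$1$. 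So your argument establishes exactly the case the paper chooses to prove, but not the other variant the paper asserts.
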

	\begin{proof} In the first three moves of the game we have $|X|=2$ and $|Y|=1$. Therefore player one can guarantee there exists $q,r\in \{1,4,6\}$ such that $x_{q},x_{r}\in X$.
	
		By rotating the game board $120\degree$ to the left or right, we can without loss of generality, assume $x_4,x_6\in X$. At this point in the game $\{y_1, y_2, y_3\}\not\subseteq Y$ and $\{x_1, x_2, x_3\}\not\subseteq X$ since $|X|+|Y|=3$. On the next move player one plays position $1$, $2$ or $3$ whichever is available. Player one wins in direction $D_{1}$ since $\triangle_{3,1}(X)\not=\emptyset$. Thus player one only needs to win in one other direction to win the game. If $x_1\in X$ then $\{x_1,x_4,x_6\}=X$ and $\triangle_{3,2}(X)\not=\emptyset$. If $x_2\in X$ then $\{x_2,x_4,x_6\}=X$ and $\triangle_{3,3}(X)\not=\emptyset$. If $x_3\in X$ then $\{x_3,x_4,x_6\}=X$ and $\triangle_{3,2}( X)\not=\emptyset$. Thus, in any case, player one wins in at least two out of the three directions. 
	\end{proof}

\subsection{The game of Mines$_{m}(p,q,k)$}
Let $p,q,m$ and $k$ be positive integers with $k\le p,q\le m$. The game Mines$_{m}(p,q,k)$ is played on a game board of size $\triangle_{m}$ with two players each assigned one of two markings and/or colors. Players have alternating turns in which they may choose to mark one position, in this case positions are the $\triangle_{k}$'s on the game board, or not mark a position. If $p=q=n$ then we use the notation Mines$_{m}(n,k)$. If $p=q=n$ and $k=1$ then we use the notation Mines$_{m}(n)$. In this notation, Mines$_{3} = $ Mines$_{3}(2) = $ Mines$_{3}(2,1) = $ Mines$_{3}(2,2,1)$ since the positions played by both players in Mines$_{3}$ are the $\triangle_{1}$'s on the game board. 

	We again let $X\subseteq\triangle_{k}$ denote the set of moves made by player one and $Y\subseteq\triangle_{k}$ the set of moves made by player two. At any point in the game we have $X\cap Y=\emptyset$. If both players always choose to mark a position then given that player one goes first we also have $|X|=|Y|-1$ or $|X|=|Y|$, and $|X|+|Y|={m \brack k}$ when all positions have been played.  Player one \emph{wins} if they construct a $Z\in\triangle_{p}$ such that $\triangle_{k}(Z)\subseteq X$ before the second player is able to construct a $Z\in\triangle_{q}$ such that $\triangle_{k}(Z)\subseteq Y$.  Player two \emph{wins} if they construct a $Z\in\triangle_{q}$ such that $\triangle_{k}(Z)\subseteq Y$ before the first player is able to construct a $Z\in\triangle_{p}$ such that $\triangle_{k}(Z)\subseteq X$. A \emph{draw} occurs if all positions have been played and neither player has won the game. That is, $|X|+|Y|={ m \brack k}$ and for all $Z\in\triangle_{n}$, $\triangle_{k}(Z)\not \subseteq X$ and $\triangle_{k}(Z)\not \subseteq Y$. The game ends when all positions have been played or one of the players wins. 
	
	For some small game boards it is unnecessary to keep track of which player first constructs the winning triangular set. Instead the players can simply fill out the game board completely and then check to see who wins. For example, Mines$_{5}(3) = $ Mines$_{5}(3,3,1)$ has this property. Figure \ref{MinesBoard} gives two examples of game boards for Mines$_{5}(n)$.  
\begin{figure}[h]
\begin{center}
			\begin{tikzpicture}
		\path[use as bounding box] (-2.5,-5) rectangle (8.5, 0);
		\begin{scope}[yshift=-4.5cm, xshift=-3cm]
		\draw (2.5,3.25) node{$1$};
		\draw (2,2.5) node{$2$} (3,2.5) node{$3$};
		\draw (1.5,1.75) node{$4$} (2.5,1.75) node{$5$} (3.5,1.75) node{$6$};
		\draw (1,1) node{$7$} (2,1) node{$8$} (3,1) node{$9$} (4,1) node{$10$};
		\draw (0.5,0.25) node{$11$} (1.5,0.25) node{$12$} (2.5,0.25) node{$13$} (3.5,0.25) node{$14$} (4.5,0.25) node{$15$};
		\draw (0,0) -- (2.5,3.75) -- (5,0) -- (0,0); %Triangular Border
		\draw[fill=gray] (1,0) -- (1.5,0.75) -- (0.5,0.75) -- cycle (1.5,0.75) -- (2,0) -- (2.5, 0.75) -- cycle (1.5,0.75) -- (2,1.5) -- (1,1.5) -- cycle (3,0) -- (2.5,0.75) -- (3.5,0.75) -- cycle (4,0) -- (3.5,0.75) -- (4.5,0.75) -- cycle (2.5,0.75) -- (2, 1.5) -- (3,1.5) -- cycle (3.5,0.75) -- (3,1.5) -- (4,1.5) -- cycle (2,1.5) -- (1.5,2.25) -- (2.5,2.25) -- cycle (3,1.5) -- (2.5,2.25) -- (3.5,2.25) -- cycle (2.5,2.25) -- (2,3) -- (3,3) -- cycle;
	\end{scope}
		\begin{scope}	
		\draw (6,-0.6) node{$1$} (5.25,-1.25) node{$2$} (6.75,-1.25) node{$3$} (4.6,-2.1) node{$4$} (6,-2.1) node{$5$} (7.4,-2.1) node{$6$} (4.2,-3.2) node{$7$} (5.2,-3.4) node{$8$} (6.8,-3.4) node{$9$} (7.8,-3.2) node{$10$} (4,-4.1) node{$11$} (5,-4.4) node{$12$} (6,-4.5) node{$13$} (7,-4.4) node{$14$} (8,-4.1) node{$15$};
		\begin{scope}
		\foreach \x/\y in {5/-0.5, 6/2.05, 7.85/-1.1}
		{\clip (\x,\y) arc(120:180:5) arc(-120:-60:5) arc(0:60:5);}
		\fill[gray] (6,0) arc(120:180:5) arc(-120:-60:5) arc(0:60:5);
		\end{scope}
		\begin{scope}
		\foreach \x/\y in {6/1, 4.15/-1.1, 7.85/-1.1}
		{\clip (\x,\y) arc(120:180:5) arc(-120:-60:5) arc(0:60:5);}
		\fill[gray] (6,0) arc(120:180:5) arc(-120:-60:5) arc(0:60:5);
		\end{scope}
		\begin{scope}
		\foreach \x/\y in {7/-0.5, 6/2.05, 4.15/-1.1}
		{\clip (\x,\y) arc(120:180:5) arc(-120:-60:5) arc(0:60:5);}
		\fill[gray] (6,0) arc(120:180:5) arc(-120:-60:5) arc(0:60:5);
		\end{scope}
		\begin{pgfinterruptboundingbox}
		\path[clip,draw] (6,0) arc(120:180:5) arc(-120:-60:5)arc (0:60:5);
		\end{pgfinterruptboundingbox}
		\foreach \x/\y in {6/0, 6/1,7/-0.5,5/-0.5,6/2.05,7.85/-1.1,4.15/-1.1}
		{\draw (\x,\y) arc(120:180:5) arc(-120:-60:5) arc(0:60:5);}
		\end{scope}
		
		\end{tikzpicture}
	\end{center}
	\caption{Two example game boards of size $\triangle_5$}
	\label{MinesBoard}
\end{figure}

\begin{theorem}
	In the game of Mines$_{5}(3)$,	both players cannot construct a $\triangle_3$. That is, it is impossible for both $\triangle_3(X)\not=\emptyset$ and $\triangle_3(Y)\not=\emptyset$.
	\end{theorem}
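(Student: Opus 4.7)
The plan is to argue by contradiction: suppose there are disjoint $X,Y$ on the game board each containing a $\triangle_3$, and extract a contradiction from the row sizes of $\triangle_5$. By the definition of $\leqslant$, any $\triangle_3$ inside the $\triangle_5$ game board has its three levels (of sizes $1,2,3$) placed in three strictly increasing distinct rows of $\triangle_5$; in particular, the bottom $3$-point level must lie in a row of size at least $3$, so in row $3$, $4$, or $5$. Let $r_X$ and $r_Y$ denote those bottom rows for $X$'s and $Y$'s respective $\triangle_3$'s.

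The first observation is that $r_X\neq r_Y$: otherwise a single row would have to contain $6$ disjoint points, but no row of $\triangle_5$ has more than $5$. So WLOG $r_X<r_Y$, and $(r_X,r_Y)$ is one of $(3,4)$, $(3,5)$, or $(4,5)$.

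When $r_X=3$: since row $3$ has exactly $3$ points, $X$'s bottom level fills it entirely; its middle ($2$-point) level needs a row above row $3$ of size $\geq 2$, forcing it to fill row $2$; and its top level is then forced to be row $1$. So $X$ contains every point of rows $1$--$3$. In case $(3,4)$, $Y$'s middle level needs $2$ points from rows $1$--$3$, all of which are in $X$, a contradiction; in case $(3,5)$, $Y$'s middle can fit in row $4$, but its top level then requires a point in rows $1$--$3$, again fully occupied by $X$. In the remaining case $(4,5)$, $X$'s bottom level uses $3$ of the $4$ points of row $4$, leaving only $1$ point there, so $Y$ cannot place its middle level in row $4$; hence both middle levels lie in row $2$ or row $3$. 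A short subcase analysis on the four pairings (same row impossible by the row-size pigeonhole; $X$ in row $2$ and $Y$ in row $3$, or vice versa) forces one player to take all of row $2$ and then all of row $1$, blocking the other player's top level.

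The argument uses only elementary pigeonhole on the row sizes $1,2,3,4,5$ of $\triangle_5$; the main obstacle is disciplined bookkeeping in case $(4,5)$, where each of the four placement-pairings for the middle levels has to be ruled out by disjointness. I would present the finished proof as these three short case paragraphs rather than a table, since the row-by-row forcing reads naturally in prose.
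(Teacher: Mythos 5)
Your argument is correct and complete. It lives in the same elementary world as the paper's proof---pigeonhole on the row sizes $1,2,3,4,5$ of $\triangle_5$, beginning with the observation that the two $3$-point bottom levels cannot share a row---but the case decomposition is genuinely different. The paper implicitly works with a fully played board, applies the pigeonhole principle to conclude that one player holds at least three points of row $5$, and then splits into cases according to whether the opponent's bottom level sits in row $4$ or row $3$, chasing the middle and top levels upward much as you do. Your decomposition by the ordered pair $(r_X,r_Y)\in\{(3,4),(3,5),(4,5)\}$ of bottom-level rows buys two things: the exhaustiveness of the case analysis is transparent, and no completed-board assumption is needed anywhere, since every step uses only the disjointness of $X$ and $Y$ together with the row sizes. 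The only place to be careful when writing it up is the $(4,5)$ case: you should record explicitly that $X$'s middle level is also barred from row $1$ (too small) and from row $4$ (not strictly above $X$'s bottom level), so that both middle levels really are confined to rows $2$ and $3$, and then run the four pairings exactly as you describe---the two same-row pairings die by row size, and each mixed pairing forces one player to absorb rows $2$ and $1$, leaving the other player's top level with no available point.
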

	\begin{proof}
		Assume this is not the case. There exists a situation such that $\triangle_3(X)\not=\emptyset$ and $\triangle_3(Y)\not=\emptyset$. The largest level of a $\triangle_5$ has 5 positions and the largest level of a $\triangle_3$ has 3 positions. Therefore, both $X$ and $Y$ cannot construct their $\triangle_3$'s largest level in the same row of the game board. By the pigeon hole principle there exists $a,b,c \in \{11,12,13,14,15\}$ such that $x_a,x_b,x_c\in X$ or $y_a,y_b,y_c\in Y$. Suppose that $x_a,x_b,x_c\in X$. We have the following two cases:
		
		\emph{Case 1:} There exists $d,e,f\in \{7,8,9,10\}$ such that $y_d,y_e,y_f\in Y$. Since $\triangle_3(Y)\not=\emptyset$, either there exists $g,h \in \{4,5,6\}$ such that  $y_g,y_h\in Y$ or $y_{2},y_{3}\in Y$. If $g,h \in \{4,5,6\}$ then $x_2,x_3\in X$. If $y_{2},y_{3}\in Y$ then there exists $i,j\in \{4,5,6\}$ such that $x_i,x_j\in X$. In order for $\triangle_3(X)\not=\emptyset$ we must have $x_1\in X$ which causes $\triangle_3(Y)=\emptyset$, a contradiction.
		
		\emph{Case 2:} There exists $d,e,f\in \{4,5,6\}$ such that $y_d,y_e,y_f\in Y$. Then either there exists $g,h \in \{7,8,9,10\}$ such that $x_g,x_h\in X$ or $x_{2},x_{3}\in X$. If $x_{2},x_{3}\in X$ then clearly $\triangle_3(Y)\not=\emptyset$. If there exists $g,h \in \{7,8,9,10\}$ such that $x_g,x_h\in X$ then we must have $y_2,y_3\in Y$. In order for $\triangle_3(X)\not=\emptyset$ we must have $x_1\in X$ which causes $\triangle_3(Y)=\emptyset$, a contradiction.
		
		If instead $y_a,y_b,y_c\in Y$ then a similar argument gives a contradiction. So it is impossible for both $\triangle_3(X)\not=\emptyset$ and $\triangle_3(Y)\not=\emptyset$.
	\end{proof}
	The previous Theorem can be extended, in the game of Mines$_{2n-1}(n)$	both players cannot construct a $\triangle_n$. That is, it is impossible for both $\triangle_n(X)\not=\emptyset$ and $\triangle_n(Y)\not=\emptyset$. We leave the proof to the interested reader.

\section{Triangular Ramsey numbers}\label{Section5}

The Finite Ramsey Theorem for $\mathcal{R}_{1}$ which follows from the work of Dobrinen and Todorcevic in \cite{DandT} can be used to show that for all $p,q,k\in\mathbb{N}$ with $k\le p,q$ there is a game board of size $\triangle_m$ with $m\ge p,q$ such that Mines$_{m}(p,q,k)$ never ends in a draw. 
\begin{problem}
 Let $p,q$ and $k$ be natural numbers such that $k\le p,q$. Find the smallest natural number $m\ge p,q$ such that Mines$_{m}(p,q,k)$ never ends in a draw.
\end{problem} 

The solution to the Problem when $p,q=2$ and $k=1$ is $m=3$. The solution to the Problem for the natural numbers $p,q$ and $k$ is called the \emph{triangular Ramsey number for $p,q$ and $k$} and denoted by $\mathcal{R}_{1}(p,q,k)$. If $p=q=n$ then we denote $\mathcal{R}_{1}(p,q,k)$  by $\mathcal{R}_{1}(n,k)$.
\begin{lemma}
For all natural numbers $p$ and $q$, $\mathcal{R}_{1}(p,q,1)>p+q-2$.
\end{lemma}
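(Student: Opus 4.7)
The plan is to prove the strict inequality by exhibiting a specific terminal configuration of Mines$_{p+q-2}(p,q,1)$ that is a draw. A draw requires $X \cup Y$ to cover every position of the board with $\triangle_p(X) = \emptyset$ and $\triangle_q(Y) = \emptyset$. Once such a terminal configuration is produced, the game can actually reach it: the two players alternate turns marking their assigned positions (using the skip option whenever their mark-counts differ), and because $\triangle_p(X')$ and $\triangle_q(Y')$ are monotone in the growing sets $X' \subseteq X$ and $Y' \subseteq Y$, no intermediate position can trigger a premature win, so play continues until every position has been marked.

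The construction I have in mind is the simplest horizontal split. The board $\triangle_{p+q-2}$ has $p+q-2$ levels; I assign every position in levels $1, 2, \ldots, p-1$ to player one and every position in levels $p, p+1, \ldots, p+q-2$ to player two. Since $(p-1)+(q-1) = p+q-2$, this partitions the board exactly.

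To verify that no draw-breaking triangular substructure exists, the key is the \emph{distinct level} clause in the definition of $\leq$. Suppose, toward a contradiction, that some $Z \in \triangle_p$ satisfied $Z \leq \triangle_{p+q-2}$ and $Z \subseteq X$. Then the $p$ levels of $Z$ would have to sit in $p$ distinct levels of the board; but $X$ is contained in only $p-1$ board levels, leaving at most $p-1$ candidate host levels and hence a contradiction. The mirror argument rules out any $Z \in \triangle_q$ with $Z \subseteq Y$, since $Y$ spans only $q-1$ board levels.

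I do not anticipate a substantive obstacle; the main point to be careful about is to apply the partial order correctly, since it is the distinct-levels condition in the definition of $\leq$, rather than mere set inclusion, that prevents a $\triangle_p$ from being packed into fewer than $p$ rows. Given that, the argument reduces to a clean pigeonhole on levels.
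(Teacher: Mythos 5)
Your proof is correct, but it uses a genuinely different coloring from the paper's. The paper splits each row of the board \emph{within} the row: the bottom level of size $p+q-2$ gets $p-1$ positions for player one and $q-1$ for player two (and likewise every smaller level), so that no single board level can contain the size-$p$ bottom level of a monochromatic $\triangle_p$ for player one, nor the size-$q$ bottom level of a $\triangle_q$ for player two. You instead split the board \emph{by} rows, giving player one all of levels $1,\dots,p-1$ and player two all of levels $p,\dots,p+q-2$, and then pigeonhole on the number of distinct host levels: a $Z\in\triangle_p$ below the board needs $p$ distinct board levels, but player one's positions occupy only $p-1$. Both constructions are valid and both ultimately rest on the ``each level of $Z$ sits in a single distinct level of the board'' clause of the partial order; yours leans on the \emph{distinctness} of the host levels, the paper's on the fact that a single host level must be wide enough to contain the largest level of $Z$ monochromatically. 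The paper's choice has the aesthetic advantage of being the exact dual of the pigeonhole used in the matching upper-bound proof of Theorem \ref{Delta1Thm} (which finds $p$ elements of $X$ or $q$ elements of $Y$ in the bottom row); your choice has the advantage of a one-line verification, and your explicit remark that monotonicity of the win condition lets the players legally reach the terminal configuration without a premature win is a point the paper glosses over entirely.
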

\begin{proof}
Suppose a game of Mines$_{p+q-2}(p,q,1)$ is to be played. It is possible for the bottom row, of size $p+q-2$, to contain $p-1$ elements in $X$ and the remaining $q-1$ elements in $Y$. Note that neither player has constructed the bottom row of a winning triangle in the bottom row. Since the rows decrease in size as players move up the triangle, it is possible for both players to fail to construct the bottom row of their winning triangle anywhere on the board. Therefore $\mathcal{R}_{1}(p,q,1)>p+q-2$.
\end{proof}

\begin{theorem}\label{Delta1Thm}
For all natural numbers $p$ and $q$, $\mathcal{R}_{1}(p,q,1)=p+q-1$. In particular, for all numbers $n$, $\mathcal{R}_{1}(n,1) = 2n-1$.
\end{theorem}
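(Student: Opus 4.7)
The preceding lemma already gives $\mathcal{R}_1(p,q,1) \geq p+q-1$, so the plan is to establish the matching upper bound $\mathcal{R}_1(p,q,1) \leq p+q-1$ by induction on $p+q$. Concretely, I would show that in any completed game of Mines$_{p+q-1}(p,q,1)$, either player one constructs a $\triangle_p$ in $X$ or player two constructs a $\triangle_q$ in $Y$.

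For the base cases $p=1$ or $q=1$, the argument is immediate. If $p=1$, the board is $\triangle_q$; either $X$ is nonempty (so $X$ contains a $\triangle_1$) or $X=\emptyset$, in which case $Y$ equals the whole board and is itself a $\triangle_q$. The case $q=1$ is symmetric.

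For the inductive step with $p,q \geq 2$, I would examine the bottom level of the board $\triangle_{p+q-1}$, which contains exactly $p+q-1$ positions. By pigeonhole, either $|X \cap \mathrm{level}_{p+q-1}| \geq p$ or $|Y \cap \mathrm{level}_{p+q-1}| \geq q$. In the first case, I consider the sub-board formed by the top $p+q-2$ levels, which is a copy of $\triangle_{p+q-2} = \triangle_{(p-1)+q-1}$. By the inductive hypothesis applied to $(p-1,q)$, this sub-board contains either a $\triangle_{p-1}$ in $X$ (in which case I append any $p$ of the guaranteed points from the bottom level to obtain a $\triangle_p$ in $X$, since the sub-board's levels are distinct from the original bottom level) or a $\triangle_q$ in $Y$ (in which case we are done immediately). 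The second pigeonhole case is handled symmetrically using the hypothesis applied to $(p,q-1)$.

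The only real subtlety is verifying that the extension step in the inductive case produces a legitimate element of $\triangle_p$ under the partial order $\leq$; namely, one must check that the $p-1$ levels used by the inherited $\triangle_{p-1}$ are distinct from the bottom level of the ambient board and that the sizes $1,2,\ldots,p-1,p$ line up correctly. This is transparent once one notes that the sub-board is exactly the top $p+q-2$ levels, so its levels are disjoint from level $p+q-1$, and the chosen $p$ points in the bottom level form the required largest level of the new $\triangle_p$. Combining this upper bound with the previous lemma gives $\mathcal{R}_1(p,q,1)=p+q-1$, and specializing to $p=q=n$ yields $\mathcal{R}_1(n,1)=2n-1$.
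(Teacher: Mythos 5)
Your proof is correct and follows essentially the same route as the paper's: the lower bound comes from the preceding lemma, and the upper bound is proved by induction on $p+q$, applying the pigeonhole principle to the bottom level of $\triangle_{p+q-1}$ and the inductive hypothesis (for $(p-1,q)$ or $(p,q-1)$) to the sub-board formed by the top $p+q-2$ levels. If anything, your explicit handling of the base cases $p=1$ or $q=1$ and of the level-alignment check when appending the bottom row is slightly more careful than the paper's, which states only the base case $p=q=1$.
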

\begin{proof} By the previous Lemma $\mathcal{R}_{1}(p,q,1)>p+q-2$. So the result follows by showing via induction on $p+q$ that $\mathcal{R}_{1}(p,q,1)\le p+q-1$. The base case occurs when $p=q=1$, \emph{i.e.} $p+q=2$. The base case is trivial; the first person to color an element creates a complete $\triangle_1$ and wins. In other words, $\mathcal{R}_{1}(1,1,1)=1$.

 Now suppose $p+q=n+1$ and the results holds when $p+q=n$. Note that $(p-1)+q=n$ and $p+(q-1)=n$. Thus by the inductive hypothesis, $\mathcal{R}_{1}(p-1,q)=\mathcal{R}_{1}(p,q-1)\le p+q-2$. We can now prove by contradiction that no game of Mines$_{p+q-1}(p,q,1)$ ends in a draw. Toward a contradiction suppose a full game of Mines$_{p+q-1}(p,q,1)$ has been played on a board of height $n$ and ends in a draw, \emph{i.e.} $\triangle_{p}(X)=\emptyset$ and $\triangle_{q}(Y)=\emptyset$. Since $p+q-1=n$ and the bottom level of the game board contains $n$ positions, the pigeon hole principle mandates that the bottom row of the game board must contain either $p$ elements in $X$, considered \emph{Case 1}, or $q$ elements in $Y$, considered \emph{Case 2}. 

\emph{Case 1:} The bottom row of the game board contains $Z'$, a set of $p$ elements in $X$. By the equation above, $\mathcal{R}_{1}(p-1,q)=p+q-1$, and the inductive hypothesis, we can see that there exists either $Z\in\triangle_{p-1}(X) \not= \emptyset$ or $\zeta\in\triangle_{q}(Y) \not= \emptyset$. If the first is the case, then $Z\cup Z'\in\triangle_{p}(X)$ and the game is won by player 1. If the second is the case, then the game is won by player 2. In either, we have a contradiction.

\emph{Case 2:} The bottom row of the game board contains $q$ elements in $Y$. By a similar method to above, we can show that either $Z\in\triangle_{q-1}(Y) \not= \emptyset$ or $\zeta\in\triangle_{p}(X)$ exists. In either situation, a fully colored triangle is made and we have a contradiction.

In either case, we obtain a contradiction. Therefore $\mathcal{R}_{1}(p,q,1)\le p+q-1$ when $p+q=n+1$. 
\end{proof}
Next we define a sequence $(M_{n,k})$ which we use to establish upper bounds for triangular Ramsey numbers. We let $R(n,k)$ denote the smallest size, number of vertices, of a complete graph such that for any coloring of its complete subgraphs with $k$ vertices with two colors there exists a complete subgraph with $n$ vertices where the coloring is monochromatic. The existence of these Ramsey numbers also follows from Ramsey's Theorem and could also be introduced by generalizing the game of Tri. Here we let $R^{l}(n,k)$ denote $\underbrace{R(R(\cdots (R}_{l-times}(n,k),k)\cdots),k)$.
\begin{equation}
\label{Mnk Def}
\begin{cases}
M_{n,k}=n & \text{if} \ k=1, \\
M_{n,k}=n & \text{if} \ n= k,\\
M_{n+1,k} = \displaystyle R^{{\mathcal{R}_{1}(M_{n,k},k-1) \brack k-1}}(n+1,k) &\text{if} \ n> k>1.\end{cases}
\end{equation}

\begin{theorem}\label{MTheorem}
Let $\left(M_{n,k}\right)$ be the sequence recursively defined by $(\ref{Mnk Def})$. If $n\ge k$ then 
$$ \mathcal{R}_{1}(n,k)\le M_{2n-1,k}.$$
\end{theorem}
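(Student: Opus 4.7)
The plan is a double induction, first on $k$ and then on $n$. The base case $k=1$ is Theorem~\ref{Delta1Thm}: $\mathcal{R}_1(n,1)=2n-1=M_{2n-1,1}$. Assuming the theorem for $k-1$, so that $\mathcal{R}_1(m,k-1)$ is finite for every $m$, I induct on $n\ge k$; the base $n=k$ uses $\mathcal{R}_1(k,k)=k$ (any first move wins on a $\triangle_k$-board, since $\triangle_k(Z)=\{Z\}$ for $Z\in\triangle_k$), together with $M_{2k-1,k}\ge k$.

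For the inductive step $n\mapsto n+1$, let $c$ be any $2$-coloring of $\triangle_k(\triangle_{M_{2n+1,k}})$. Unfolding the recursion, $M_{2n+1,k}=R^{l}(2n+1,k)$ with $r=\mathcal{R}_1(M_{2n,k},k-1)$ and $l={r \brack k-1}$, where by Theorem~\ref{counting} $l$ equals the number of $\triangle_{k-1}$'s in a $\triangle_r$. I combine three Ramsey-type steps. \emph{First (hypergraph Ramsey on the bottom row):} for each $T\in\triangle_{k-1}(\text{top }\triangle_r)$, define $c_T(S)=c(T\cup S)$ on the $k$-subsets of the bottom row; a direct generalization of Lemma~\ref{TriLemma} to $l$ simultaneous $k$-uniform Ramsey colorings on $R^{l}(2n+1,k)$ vertices yields a set $V$ of $2n+1$ bottom-row points monochromatic under every $c_T$, with common colors $c^T$. \emph{Second (triangular Ramsey at level $k-1$ on the top):} the coloring $c'(T)=c^T$ on $\triangle_{k-1}(\text{top }\triangle_r)$ admits, by choice of $r$, a monochromatic $T^*\in\triangle_{M_{2n,k}}$ inside the top $\triangle_r$ of some color $c^*$. \emph{Third (inductive hypothesis inside $T^*$):} since $T^*$ has $M_{2n,k}\ge M_{2n-1,k}\ge\mathcal{R}_1(n,k)$ levels, the restriction $c|_{T^*}$ yields a monochromatic $\triangle_n$ $U$ of some color $c_0$.

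If $c_0=c^*$, then $W=U\cup V'$ for any $(n{+}1)$-subset $V'\subseteq V$ is a $\triangle_{n+1}$ monochromatic in $c^*$: every $\triangle_k$ internal to $U$ has color $c_0=c^*$ by construction of $U$, while every $\triangle_k$ with last level a $k$-subset of $V'$ has color $c^T=c^*$ for the relevant $T\in\triangle_{k-1}(U)\subseteq\triangle_{k-1}(T^*)$. The main obstacle is the opposite case $c_0=\overline{c^*}$, where the monochromatic $\triangle_n$ in the top has the ``wrong'' color to pair with $V$. I plan to close this case by iterating the first two steps inside $T^*$, producing a nested chain $T^*=T^*_1\supseteq T^*_2\supseteq\cdots\supseteq T^*_p$ with auxiliary rows $V_i$ of sizes $2n+2-i$ and associated contact colors $c^*_i$; invoking Step~3 at the innermost $T^*_p$ and then pigeonholing among $c_0,c^*_1,\ldots,c^*_p$ (skipping any $V_i$ whose $c^*_i$ disagrees with $c_0$, while enlarging $U$ correspondingly via the inductive hypothesis applied one level shallower) furnishes the desired monochromatic $\triangle_{n+1}$. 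The subtle bookkeeping here, balancing $p$, the size of $U$, and the selection of $V_i$'s so that their level counts sum to $n+1$ while the inductive hypothesis still applies at $T^*_p$, is precisely what the factor $2$ in the index of $M$ (namely $M_{2n-1,k}$ rather than $M_{n,k}$) is designed to absorb.
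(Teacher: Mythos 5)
Your first two extraction steps (the simultaneous hypergraph--Ramsey argument on the bottom row, and the auxiliary game of Mines$_{r}(M_{2n,k},k-1)$ on the top $\triangle_r$) are exactly the two moves in the paper's inductive step, and your $c_0=c^*$ case is handled correctly. The genuine gap is the case $c_0=\overline{c^*}$: the iteration-plus-pigeonhole patch you sketch does not close. If you iterate Steps 1--2 to depth $p$, you obtain rows $V_1,\dots,V_p$ with contact colors $c^*_1,\dots,c^*_p$ and an innermost board of height $M_{2n+1-p,k}$, on which your (fully monochromatic) inductive hypothesis yields a monochromatic $\triangle_m$ with $m\le n+1-\lceil p/2\rceil$ of a color $c_0$ that you do not control. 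To assemble a monochromatic $\triangle_{n+1}$ you then need either at least $n+1-m\ge\lceil p/2\rceil$ rows whose contact color agrees with $c_0$, or at least $n+1$ rows of the opposite color; the second alternative is unavailable since $p\le 2n+1-k\le 2n-1$, and the pigeonhole only guarantees $\lceil p/2\rceil$ rows of \emph{some} common color, which may well be $\overline{c_0}$. Nothing in the construction forces the inner triangle's color to match the rows' majority color, so the dichotomy can fail no matter how you choose $p$ (an off-diagonal inductive hypothesis in the spirit of $R(s,t)\le R(s-1,t)+R(s,t-1)$ might rescue this, but that changes both the statement being inducted on and the resulting bound).

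The paper avoids the problem by strengthening the induction hypothesis rather than patching the color mismatch. Its Claim produces, on a board of height $M_{n,k}$, a $Z\in\triangle_n$ that is only \emph{level-wise} monochromatic: for each level $i$ of $Z$, all $\triangle_k$'s contained in $Z$ whose last level lies in level $i$ are played by the same player, but different levels may belong to different players. With this weaker conclusion no color agreement is ever required during the induction --- your bad case simply never arises, because the new bottom row $\{z_1,\dots,z_{n+1}\}$ is appended with whatever contact color it happens to have --- and the single pigeonhole is deferred to the very end: a level-wise monochromatic $\triangle_{2n-1}$ has $n$ levels of a common color, and the $\triangle_n$ assembled from those levels is genuinely monochromatic since every $\triangle_k$ inside it has its last level in one of them. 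That is the actual role of the factor $2$ in $M_{2n-1,k}$. If you replace your inductive statement by this level-wise version, your argument goes through essentially as the paper's does; as written, the $c_0=\overline{c^*}$ case is unproved.
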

\begin{proof}

We begin by establishing a simpler result by induction on $n$.
\begin{claim} Suppose that a game of Mines$_{M_{n,k}}(n,k)$ is played to completion. There exists a $Z\in \triangle_{n}$ on the game board such that given any level $i$ of $Z$, either all $\triangle_{k}$'s contained in $Z$ whose last level is contained in the $i^{th}$ level of $Z$ are played by player 1 or all $\triangle_{k}$'s contained in $Z$ whose last level is contained in the $i^{th}$ level of $Z$ are played by player 2.
\end{claim}
\begin{proof}
First note that by the previous Theorem $\mathcal{R}_{1}(n,1)=2n-1 = M_{2n-1,1}$. Thus, the Claim holds when $k=1$. Next fix $k\ge 2$. Consider the base case when $n=k$ and $M_{n,k}=n$. If a game of Mines$_{n}(n,k)$ has been played to completion then, since there is only one playable position on the game board, the base case holds trivially. 

Assume that the Claim holds for $M_{n,k}$. Suppose that a game of Mines$_{M_{n+1,k}}(n+1,k)$ has been played to completion. As usual let $X$ denote the moves made by player one and $Y$ denote those made by player two. For each element $Z\in \triangle_{k-1}$ in the first $\mathcal{R}_{1}(M_{n,k},k-1)$ levels of the game board, we play a game of Tri$_{M_{n+1,k}}(n+1,k)$ on the final level of the game board as follows: player 1 plays the $k$-element set $\{i_{1},i_{2},\dots i_{k}\}$ if $Z\cup\{i_{1},i_{2},\dots,i_{k}\}\in X$ and player 2 plays the $k$-element set $\{i_{1},i_{2},\dots i_{k}\}$ if $Z\cup\{i_{1},i_{2},\dots,i_{k}\}\in Y$. By argument similar to the proof of  Lemma \ref{TriLemma}, there exists an $(n+1)$-element set in the last level of the game board $\{z_{1},z_{2},\dots z_{n+1}\}$ such that for all $Z\in \triangle_{k-1}$ in the first $\mathcal{R}_{1}(M_{n,k},k-1)$ levels of the game board either $(\dagger)$ for all $i_{1},i_{2},\dots i_{k}\in\{z_{1},z_{2},\dots z_{n+1}\}$, $Z\cup\{i_{1},i_{2},\dots i_{k}\}\in X$ or $(\ddagger)$  for all $i_{1},i_{2},\dots i_{k}\in\{z_{1},z_{2},\dots z_{n+1}\}$, $Z\cup\{i_{1},i_{2},\dots i_{k}\}\in Y$.

Next consider the following hypothetical game of Mines$_{\mathcal{R}_{1}(M_{n,k},k-1)}(M_{n,k},k-1)$ played on the first $\mathcal{R}_{1}(M_{n,k},k-1)$ levels of the our original game board. Let $\bar{X}$ denote the moves made by player one and $\bar{Y}$ denote those made by player two. In this game, player 1 plays position $Z\in \triangle_{k-1}$ if $(\dagger)$ holds and player 2 plays position $Z\in \triangle_{k-1}$ if $(\ddagger)$ holds. By definition this game does not end in a draw. If player 1 wins this game then there exists $W\in \triangle_{M_{n,k}}$ such that all $\triangle_{k}$'s whose first $k-1$ levels are in $W$ and whose last level is contained in $\{z_{1},z_{2},\dots z_{n+1}\}$ are played by player 1. If player 2 wins this game then there exists $W\in \triangle_{M_{n,k}}$ such that all $\triangle_{k}$'s whose first $k-1$ levels are in $W$ and whose last level is contained in $\{z_{1},z_{2},\dots z_{n+1}\}$ are played by player 2.

By the induction hypothesis there exists a $Z'\in \triangle_{n}(W)$ such that given any level $i$ of $Z'$, either all $\triangle_{k}$'s contained in $Z'$ whose last level is contained in the $i^{th}$ level of $Z'$ are played by player 1 or all $\triangle_{k}$'s contained in $Z'$ whose last level is contained in the $i^{th}$ level of $Z'$ are played by player 2.

Let $Z''= Z'\cup\{z_{1},z_{2},\dots z_{n+1}\}\in \triangle_{n+1}$. Then either all $\triangle_{2}$'s contained in $Z''$ whose last level is contained in the $i^{th}$ level of $Z''$ are played by player 1 or all $\triangle_{2}$'s contained in $Z''$ whose last level is contained in the $i^{th}$ level of $Z''$ are played by player 2. Therefore the Claim holds by induction.
\end{proof}

To prove the inequality, assume toward a contradiction that a game of Mines$_{M_{2n-1,k}}$ ends in a draw. By the previous Claim there exists a $Z\in \triangle_{2n-1}$ on the game board such that given any level $i$ of $Z$, either all $\triangle_{k}$'s contained in $Z$ whose last level is contained in the $i^{th}$ level of $Z$ are played by player 1 or all $\triangle_{k}$'s contained in $Z$ whose last level is contained in the $i^{th}$ level of $Z$ are played by player 2. By the pigeon hole principle there are either at least $n$ level where player 1 plays all $\triangle_{k}$'s or at least $n$ levels where player 2 plays all $\triangle_{k}$'s. If there are at least $n$ levels where player 1 wins then any $W\in\triangle_{n}$ whose levels come from these $n$ levels witnesses a win for player 1, a contradiction. Similarly, if there are at least $n$ levels where player 2 wins then any $W\in\triangle_{n}$ whose levels come from these $n$ levels witnesses a win for player 2, a contradiction. Therefore, this game could not have ended in a draw.
\end{proof}
When $n=k+1$ the previous proof can be simplified and we obtain smaller upper bounds. In fact, in this special case, induction on $n$ is unnecessary.

\begin{theorem}\label{MTheorem2} Suppose that $k\ge2$. Then 
$$ \mathcal{R}_{1}(k+1,k)\le R^{{\mathcal{R}_{1}(k+1, k-1) \brack k-1}}(k+1,k).$$
\end{theorem}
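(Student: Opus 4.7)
The plan is to run a single-step version of the argument behind the proof of Theorem~\ref{MTheorem}: because the outer game is Mines$_M(k+1,k)$ and we arrange for the inner hypothetical game to already produce a $\triangle_{k+1}$ directly, no induction on $n$ is needed. Assume for contradiction that a game of Mines$_M(k+1,k)$ with $M=R^{{\mathcal{R}_{1}(k+1, k-1) \brack k-1}}(k+1,k)$ is played to completion and ends in a draw; then every $\triangle_k$ on the board is owned by one of the players, so let $X$ and $Y$ denote the resulting partition of the $\triangle_k$-positions.

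The first step is to play a Tri game on the bottom level of the board for each $Z\in\triangle_{k-1}$ lying in the top $\mathcal{R}_{1}(k+1,k-1)$ levels: a $k$-subset $A$ of the bottom level is played by player~1 if $Z\cup A\in X$ and by player~2 otherwise. By Theorem~\ref{counting} there are ${\mathcal{R}_{1}(k+1, k-1) \brack k-1}$ such $Z$'s, and $M$ is precisely the corresponding iterated hypergraph Ramsey number, so the argument of Lemma~\ref{TriLemma} (applied to the $k$-uniform version) yields a subset $S$ of the bottom level of size $k+1$ that is monochromatic for every one of these Tri games. Consequently each $Z\in\triangle_{k-1}$ in the top $\mathcal{R}_{1}(k+1,k-1)$ levels satisfies exactly one of the alternatives $(\dagger)$ ``$Z\cup A\in X$ for every $A\in\binom{S}{k}$'' or $(\ddagger)$ ``$Z\cup A\in Y$ for every $A\in\binom{S}{k}$''. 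Using $(\dagger)/(\ddagger)$ as a two-player coloring of the $\triangle_{k-1}$-positions, the second step is a hypothetical game of Mines$_{\mathcal{R}_{1}(k+1,k-1)}(k+1,k-1)$ played on the top $\mathcal{R}_{1}(k+1,k-1)$ levels; by the definition of $\mathcal{R}_1(k+1,k-1)$ this game cannot end in a draw, so without loss of generality there is a $W\in\triangle_{k+1}$ in those levels whose $\triangle_{k-1}(W)$ consists entirely of $(\dagger)$-positions.

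The last step is a short case split on the colors of the members of $\triangle_k(W)$. If some $T\in\triangle_k(W)$ lies in $X$, set $\Delta=T\cup S$; then $\Delta\in\triangle_{k+1}$, one element of $\triangle_k(\Delta)$ is $T$ itself (in $X$ by choice), and every other element has the form $Z'\cup A$ for some $Z'\in\triangle_{k-1}(T)\subseteq\triangle_{k-1}(W)$ and $A\in\binom{S}{k}$, which lies in $X$ by $(\dagger)$. If instead every $T\in\triangle_k(W)$ lies in $Y$, then $\Delta=W$ itself already satisfies $\triangle_k(\Delta)\subseteq Y$. Either way a player has won, contradicting the assumed draw, and the stated upper bound on $\mathcal{R}_1(k+1,k)$ follows.

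The main point that needed care, and the reason this is sharper than simply plugging $n=k+1$ into Theorem~\ref{MTheorem}, is precisely this last case split: extending an arbitrary $T\in\triangle_k(W)$ by $S$ is not automatically a monochromatic $\triangle_{k+1}$, so one has to notice that the ``bad'' scenario in which every $T\in\triangle_k(W)$ has the wrong color is already a direct win for the opposite player via $\Delta=W$. This is exactly what forces the hypothetical game to be played on $\mathcal{R}_{1}(k+1,k-1)$ levels (rather than the $\mathcal{R}_{1}(k,k-1)$ levels that would appear in the literal $n=k+1$ case of Theorem~\ref{MTheorem}): we need $W$ itself to already be a $\triangle_{k+1}$ so that it can serve as a ready-made winning configuration in the degenerate subcase.
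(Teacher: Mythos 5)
Your proposal is correct and follows essentially the same route as the paper's own proof: the same family of Tri games on the bottom level yielding a monochromatic $(k+1)$-set $S$, the same hypothetical game of Mines$_{\mathcal{R}_{1}(k+1,k-1)}(k+1,k-1)$ on the top levels producing $W\in\triangle_{k+1}$, and the same final case split (some $T\in\triangle_k(W)$ in $X$ gives the win $T\cup S$; otherwise $W$ itself is a win for player 2). Your closing remark about why $W$ must itself be a $\triangle_{k+1}$ is exactly the observation the paper makes, just stated more explicitly.
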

\begin{proof} Let $M=R^{\mathcal{R}_{1}(k+1, k-1) \brack k-1}(k+1,k)$. Toward a contradiction suppose that a game of Mines$_{M}(k+1,k)$ ends in a draw. As usual, let $X$ denote the moves made by player one and $Y$ denote those made by player two. For each element $Z\in \triangle_{k-1}$ in the first $\mathcal{R}_{1}(k+1, k-1)$ levels of the game board, we play a game of Tri$_{M}(k+1,k)$ on the final level of the game board as follows: player 1 plays the $k$-element set $\{i_{1},i_{2},\dots i_{k}\}$ if $Z\cup\{i_{1},i_{2},\dots,i_{k}\}\in X$ and player 2 plays the $k$-element set $\{i_{1},i_{2},\dots i_{k}\}$ if $Z\cup\{i_{1},i_{2},\dots,i_{k}\}\in Y$. By Lemma \ref{TriLemma}, there exists an $(k+1)$-element set in the last level of the game board $\{z_{1},z_{2},\dots z_{k+1}\}$ such that for all $Z\in \triangle_{k-1}$ in the first $\mathcal{R}_{1}(k+1, k-1)$ levels of the game board either $(\dagger)$ for all $i_{1},i_{2},\dots i_{k}\in\{z_{1},z_{2},\dots z_{k+1}\}$, $Z\cup\{i_{1},i_{2},\dots i_{k}\}\in X$ or $(\ddagger)$  for all $i_{1},i_{2},\dots i_{k}\in\{z_{1},z_{2},\dots z_{k+1}\}$, $Z\cup\{i_{1},i_{2},\dots i_{k}\}\in Y$.

Next consider the following hypothetical game of Mines$_{\mathcal{R}_{1}(k+1, k-1)}(k+1,k-1)$). Let $\bar{X}$ denote the moves made by player one and $\bar{Y}$ denote those made by player two. In this game, player 1 plays position $Z$ if $(\dagger)$ holds and player 2 plays position $Z$ if $(\ddagger)$ holds. In other words, $\bar{X}=\{Z\in \triangle_{k-1} :Z\cup\{i_{1},i_{2},\dots i_{k}\}\in X\}$ and $\bar{Y}=\{Z\in \triangle_{k-1} :Z\cup\{i_{1},i_{2},\dots i_{k}\}\in Y\}$. By definition this game does not end in a draw. 

Suppose that player 1 wins and let $W$ be some element of $\triangle_{k+1}$ witnessing a win for player 1 (in our hypothetical game of Mines$_{\mathcal{R}_{1}(k+1, k-1)}(k+1,k-1)$. Note that not all $\triangle_{k}'s$ in $W$ are played by player 2 (in our original game) because otherwise the game would not have ended in a draw. So, without loss of generality, we can assume that there is at least one $W'\in\triangle_{k}(W)$ played by player 1 (in the original game). However, this is a contradiction because $W'\cup\{z_{1},z_{2},\dots z_{k+1}\}$ then witnesses a win for player 1 (in our original game). If instead player 2 wins, we can let $\bar{Z}$ be some element of $\triangle_{3}(\bar{Y})$ and we obtain a similar contradiction.
\end{proof}

\subsection{The probabilistic method}

The work in this section follows closely from the probabilistic method described by Erd\"{o}s \cite{ProbMethod}. In our case, we apply it to a randomly played game of Mines.

\begin{theorem}\label{lower bound 1} Let $p,q,k$ and $m$ be natural numbers such that $k\le p\le q\le m$.
If ${m \brack p} \cdot 2^{-{p \brack k}} + {m \brack q} \cdot 2^{-{q \brack k}} < 1$ then $\mathcal{R}_{1}(p,q,k)> m.$
\end{theorem}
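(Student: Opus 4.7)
The plan is to apply the probabilistic method exactly in the spirit of Erdős's lower bound for classical Ramsey numbers, but on the random game board rather than on a random graph. To show $\mathcal{R}_{1}(p,q,k) > m$, it suffices to exhibit at least one completed game of Mines$_{m}(p,q,k)$ that ends in a draw; the strategy will be to produce such a game by random play and show the failure probability is strictly less than $1$.

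Concretely, I would consider a random completed game on a board of size $\triangle_{m}$ in which each of the ${m \brack k}$ positions (the $\triangle_{k}$'s of the board) is independently assigned to player 1 (the set $X$) or to player 2 (the set $Y$) with probability $\tfrac{1}{2}$ each. Fix $Z\in\triangle_{p}$. By Theorem \ref{counting} applied inside $Z$, the number of $\triangle_{k}$'s contained in $Z$ is exactly ${p \brack k}$, and each of these positions independently lies in $X$ with probability $\tfrac{1}{2}$. Hence the probability that $\triangle_{k}(Z)\subseteq X$, i.e.\ that $Z$ witnesses a win for player 1, equals $2^{-{p \brack k}}$. The total number of such $Z$ is ${m \brack p}$ by Theorem \ref{counting}, so a union bound gives
\[
\Pr\bigl(\exists\,Z\in\triangle_{p}:\triangle_{k}(Z)\subseteq X\bigr) \;\le\; {m \brack p}\,2^{-{p \brack k}}.
\]
By the symmetric argument for player 2,
\[
\Pr\bigl(\exists\,Z\in\triangle_{q}:\triangle_{k}(Z)\subseteq Y\bigr) \;\le\; {m \brack q}\,2^{-{q \brack k}}.
\]

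Adding the two bounds and using the hypothesis that their sum is strictly less than $1$, the probability that the random game is won by some player is less than $1$. Therefore, with positive probability, the random completion of the board satisfies $\triangle_{k}(Z)\not\subseteq X$ for every $Z\in\triangle_{p}$ and $\triangle_{k}(Z)\not\subseteq Y$ for every $Z\in\triangle_{q}$; that is, there exists a completed game of Mines$_{m}(p,q,k)$ ending in a draw. By the definition of $\mathcal{R}_{1}(p,q,k)$ as the smallest board size forbidding draws, this yields $\mathcal{R}_{1}(p,q,k)>m$.

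There is no real obstacle here; the only points requiring care are bookkeeping. One should note that the two events ``a fixed $Z$ gives a win for player 1'' and ``a fixed $Z$ gives a win for player 2'' are disjoint on the same $Z$ and need not be independent across different $Z$'s, but the union bound sidesteps any such concern. Everything else is a direct invocation of Theorem \ref{counting} to count both the $\triangle_{p}$'s (respectively $\triangle_{q}$'s) in $\triangle_{m}$ and the $\triangle_{k}$'s inside each $\triangle_{p}$ (respectively $\triangle_{q}$).
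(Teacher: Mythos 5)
Your proposal is correct and takes essentially the same approach as the paper: a uniformly random two-coloring of the ${m \brack k}$ positions, the counts supplied by Theorem \ref{counting}, and a first-moment argument showing the probability of any monochromatic winning $\triangle_p$ or $\triangle_q$ is less than $1$. The only cosmetic difference is that the paper phrases this as $E[X+Y]<1$ for the counting random variables rather than as an explicit union bound over events, which is the same estimate.
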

\begin{proof}
	Let $p,q,k$ and $m$ be given. To use the probabilistic method, we consider two players playing a game of Mines$_{m}(p,q,k)$ game with a fair coin. The players pick a position and flip the coin to see if they will play that triangular set or skip their turn. The game is then played to completion using these random moves.
	
	Consider the random variables $X$ and $Y$ where $X$ counts the number of winning $\triangle_p$'s colored by player 1 and $Y$ the number of winning $\triangle_q$'s colored by player 2.  By Theorem \ref{counting}, the probability that a randomly chosen $\triangle_{p}$ from the game board witnesses a win for player one  is $2^{-{p \brack k}}$ and for a randomly chosen $\triangle_q$ the probability that it witnesses a win for player two is $2^{-{q \brack k}}$. By Theorem \ref{counting}, there are exactly ${m \brack p}$ possible $\triangle_{p}$'s on the game board and ${m \brack q}$ possible $\triangle_{q}$'s. Thus, ${m \brack p}\cdot2^{1 - {p \brack k}}=E[X]$ and ${m \brack q}\cdot2^{1 - {q \brack k}}=E[Y]$. Thus $E[X+Y]=E[X]+E[Y]={m \brack p} \cdot 2^{-{p \brack k}} + {m \brack q} \cdot 2^{-{q \brack k}} < 1$. Since the expected value of $X+Y$ is less than one then there exists some game of Mines$_{m}(p,q,k)$ that ends in a draw. Therefore $m<\mathcal{R}_{1}(p,q,k)$.
	\end{proof}
The previous Theorem can be used to find lower bounds for small values of $p,q$ and $k$ by searching for the largest value of $m$ that satisfies the inequality. A summary of these values for small $p,q$ and $k$ can be found in Figure \ref{TRN}. The next Theorem provides asymptotic estimates for large values of $p,q$ and $k$ with $p=q=n$ which don't require the computation of ${n \brack k}$ nor ${m \brack n}$ for any $m$.

	\begin{theorem}\label{lower bound 2}
		For all natural numbers $n$ and $k$ with $n\le k$,
		\[m > (2 \pi T_n)^{\frac{1}{4T_n}} \cdot \sqrt{ \frac{2T_n}{e}} \cdot 2^{\frac{n^k - k^k}{2k^kT_n}} - 1\]
		
	\end{theorem}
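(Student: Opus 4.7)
The plan is to specialize Theorem \ref{lower bound 1} to the case $p=q=n$, which reduces the inequality $\mathcal{R}_{1}(n,k) > m$ to the single-variable condition
$$ 2 {m \brack n} < 2^{{n \brack k}}. $$
The strategy is then to replace ${m \brack n}$ by a tractable closed-form upper bound and ${n \brack k}$ by a tractable closed-form lower bound so that the resulting inequality can be solved explicitly for $m+1$, producing the expression in the theorem.

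For the upper bound on ${m \brack n}$, I would observe that every $\triangle_n$ sitting inside a $\triangle_m$ is uniquely determined by its underlying $T_n$-element subset of the $T_m$ points of $\triangle_m$, so ${m \brack n} \le \binom{T_m}{T_n}$. Applying the standard estimate $\binom{a}{b} \le a^b/b!$, Stirling's lower bound $T_n! \ge \sqrt{2\pi T_n}(T_n/e)^{T_n}$, and $T_m = m(m+1)/2 \le (m+1)^2/2$ yields
$$ {m \brack n} \le \frac{1}{\sqrt{2\pi T_n}} \left(\frac{e(m+1)^2}{2 T_n}\right)^{T_n}. $$
For the lower bound on ${n \brack k}$, Corollary \ref{brack sums} shows that the single summand corresponding to the indices $(i_1,\ldots,i_{k-1},i_k) = (1,2,\ldots,k-1,n)$ is precisely $\binom{n}{k}$, so ${n \brack k} \ge \binom{n}{k}$. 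Combining this with the identity $\binom{n}{k} = \prod_{i=0}^{k-1} (n-i)/(k-i)$ and the per-factor inequality $(n-i)/(k-i) \ge n/k$ (valid for $0 \le i < k \le n$), I obtain ${n \brack k} \ge (n/k)^k = n^k/k^k$.

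Substituting these two estimates into $2 {m \brack n} < 2^{{n \brack k}}$, it suffices to ensure
$$ \frac{2}{\sqrt{2\pi T_n}} \left(\frac{e(m+1)^2}{2T_n}\right)^{T_n} < 2^{n^k/k^k}. $$
Taking $T_n$-th roots of both sides and solving for $m+1$ produces exactly the claimed inequality: the Stirling prefactor $\sqrt{2\pi T_n}$ yields the $(2\pi T_n)^{1/(4T_n)}$ term, the principal factor $e(m+1)^2/(2T_n)$ becomes $\sqrt{2T_n/e}$, and combining the extra factor of $2$ on the left with the right-hand exponent $n^k/k^k$ produces the exponent $(n^k - k^k)/(2k^k T_n)$ on $2$. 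The main obstacle is essentially arithmetic bookkeeping: making sure each Stirling and binomial estimate is tight enough to reproduce the precise constants appearing in the theorem, and noting that the stated hypothesis ``$n \le k$'' must in fact read $k \le n$, since otherwise $\binom{n}{k}$ and the triangular counts are degenerate.
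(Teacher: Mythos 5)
Your proposal is correct and follows essentially the same route as the paper's proof: specialize Theorem \ref{lower bound 1} to $p=q=n$, bound ${m \brack n} \le {T_m \choose T_n} \le T_m^{T_n}/T_n!$ via Stirling with $T_m \le (m+1)^2/2$, bound ${n \brack k} \ge {n \choose k} \ge (n/k)^k$ via Corollary \ref{brack sums}, and solve for $m+1$. The only cosmetic differences are that you obtain ${n \brack k} \ge {n \choose k}$ from a single summand rather than from the count of summands, and your remark that the hypothesis should read $k \le n$ is also correct.
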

	
	\begin{proof}
		
		Let $m=\mathcal{R}_{1}(n,k)$. By the previous Theorem we must have ${m \brack n} 2^{-{n \brack k} + 1} \ge 1.$ Now we also know that 
		$${m \brack n} \le {T_m \choose T_n} $$ 
		This is because if we are counting $\triangle_n$'s (which has $T_n$ points) in a $\triangle_m$ (which has $T_m$ points). Therefore we are counting some $T_n$ sized subset of $T_m$ points with certain properties. We know that ${m \choose n} \leq \frac{m^n}{n!}$
		Substituting in these inequalities gives, 
		\[ \frac{T_m^{T_n}}{T_n!} \cdot 2^{-{n \brack k}+1} \geq 1\]
		\[ \implies \frac{\left( m(m+1) \right)^{T_n}}{T_n!} \cdot 2^{-{n \brack k} +1 - T_n} \geq 1 \]
		Then by Sterling's formula that $n! > \sqrt{2 \pi n} \cdot \left( \frac{n}{e} \right)^n$ we have
		\[ \frac{\left( m(m+1) \right)^{T_n}}{ \sqrt{2 \pi T_n } \cdot T_n^{T_n} \cdot e^{-T_n}} \cdot 2^{-{n \brack k} +1 - T_n} \geq 1 \]
		\[ \implies \frac{ (m+1)^{2T_n}}{ \sqrt{2 \pi T_n } \cdot T_n^{T_n} \cdot e^{-T_n}} \cdot 2^{-{n \brack k} +1 - T_n}  \geq 1 \]
		\[ \implies (m+1)^{2T_n} \geq \sqrt{2 \pi T_n} \cdot \left( \frac{T_n}{e} \right)^{T_n} \cdot 2^{{n \brack k} + T_n -1}  \]
		
		From Corollary \ref{brack sums}, $ {n \brack k} = \sum\limits_{0<i_1<...<i_k\leq{n}} \Bigg( \prod\limits_{j=1}^k {i_j \choose j} \Bigg).$ The smallest product in this summation is $ {1 \choose 1}{2 \choose 2}\cdots{n \choose n} = 1$ since there are ${n \choose k}$ terms in the sum $ {n \brack k} \geq  {n \choose k} \geq \left(\frac{n}{k}\right)^k $. So 
		\[ (m+1)^{2T_n} \geq \sqrt{2 \pi T_n} \cdot \left( \frac{T_n}{e} \right)^{T_n} \cdot 2^{\left( \frac{n}{k} \right)^k + T_n -1} \]
		Isolating $m$ in the previous inequality and simplifying gives the result.

	\end{proof}

\section{Conclusion}\label{Section6}
 Recall that the existence of triangular Ramsey numbers follows from the work of Dobrinen and Todorcevic in \cite{DandT}. Our primary purpose for introducing the game of Mines was to provide a simplified presentation of the finite-dimensional Ramsey theory of the infinite-dimensional topological Ramsey space $\mathcal{R}_{1}$ introduced and studied by Dobrinen and Todorcevic in \cite{DandT}. The next table summarizes the main results for small values of $p,q$ and $k$. Note that $\mathcal{R}_{1}(p,q,k)=\mathcal{R}_{1}(q,p,k)$ so we only give values for $p\le q$.

\begin{figure}[h]
		\begin{center}
			$$\begin{array}{|c|c|c|c|c|c|c|} \hline
			p&q&k&$Lower Bound$&\mathcal{R}_{1}(p,q,k) &$Upper Bound$&$Result$ \\ \hline 
			$1$&$2$&$1$&-&$2$&-& $Thm. $\ref{Delta1Thm} \\ \hline 
			$2$&$2$&$1$&-&$3$&-&$Thm. $\ref{Delta1Thm}\\ \hline
			$2$&$3$&$1$&-&$4$&-&$Thm. $\ref{Delta1Thm} \\ \hline
			$3$&$3$&1&-&5&-&$Thm. $\ref{Delta1Thm}\\ \hline \hline
			$2$&$2$&$2$&-&2&-& $trivial$ \\ \hline
			$2$&$3$&$2$&-&3& - & $trivial$ \\ \hline
			$3$&$3$&$2$&6&?& R^{15}(3)& $Thm. $\ref{MTheorem2} \\ \hline
			$3$&$4$&$2$&6&?& M_{7,2}& $Thm. $\ref{MTheorem} \& $Thm. $\ref{lower bound 1} \\ \hline
			$4$&$4$&$2$&25&?& M_{7,2} & $Thm. $\ref{MTheorem} \& $Thm. $\ref{lower bound 1} \\ \hline \hline
			$3$&$3$&$3$&-&3& - &$trivial$\\ \hline
			$3$&$4$&$3$&-& 4 & - & $trivial$ \\ \hline
			$4$&$4$&$3$&20&?& R^{{M_{7,2} \brack 2}}(4,3) & $Thm. $\ref{MTheorem2}\& $Thm. $\ref{lower bound 1}\\ \hline
			$4$&$5$&$3$&20&?& M_{9,3}& $Thm. $\ref{MTheorem} \& $Thm. $\ref{lower bound 1} \\ \hline
			$5$&$5$&$3$&9.39\times 10^7&?& M_{9,3}& $Thm. $\ref{MTheorem} \& $Thm. $\ref{lower bound 1} \\ \hline\hline
			$4$&$4$&$4$&-& 4 & - & $trivial$ \\ \hline
			$4$&$5$&$4$&-& 5 & - & $trivial$ \\ \hline
			$5$&$5$&$4$&3425&?& R^{{M_{9,3} \brack 3}}(5,4) &$Thm. $\ref{MTheorem2} \& $Thm. $\ref{lower bound 1} \\ \hline\hline
			$30$&$30$&$20$& 221 &?& M_{59,20} & $Thm. $\ref{MTheorem} \& $Thm. $\ref{lower bound 2} \\ \hline
			$35$&$35$&$20$& 4.70\time 10^{18} &?& M_{69,20} & $Thm. $\ref{MTheorem} \& $Thm. $\ref{lower bound 2} \\ \hline
			$40$&$40$&$20$& 7.29\times 10^{193} &?& M_{79,20} & $Thm. $\ref{MTheorem} \& $Thm. $\ref{lower bound 2} \\ \hline
			\end{array}$$
		\end{center}
		\caption{Table of some triangular Ramsey numbers.}
		\label{TRN}
	\end{figure}
	
These upper and lower bounds have applications to characterizing the Dedekind cuts in nonstandard models of arithmetic that arise from ultrafilter mapping that are associated to the space $\mathcal{R}_{1}$ introduced by Dobrinen and Todorcevic in \cite{DandT}. In the paper \cite{BlassCut}, Blass uses upper and lower bounds for Ramsey numbers to characterize, under the continuum hypothesis, the Dedekind cuts that can be associated to ultrafilter mappings from Ramsey and weakly-Ramsey ultrafilters. Trujillo in \cite{TrujilloCut}, characterizes the Dedekind cuts that can be associated to ultrafilter mappings among ultrafilters within the Tukey-type of a Ramsey for $\mathcal{R}_{1}$ ultrafilter. This motivates the following open problem:
\begin{problem}
Use upper and lower bounds for triangular Ramsey numbers to characterize the Dedekind cuts that can be associated to ultrafilter mappings from Ramsey for $\mathcal{R}_{1}$ ultrafilters.
\end{problem}	 In the follow up paper \cite{DandT2}, Dobrinen and Todorcevic introduce a hierarchy of spaces $\mathcal{R}_{\alpha}$ for $\alpha<\omega$ that extend the space $\mathcal{R}_{1}$. 
\begin{problem}
Introduce a combinatorial game, similar to Mines, that provides a simplified presentation of the finite-dimensional Ramsey theory of the infinite-dimensional topological Ramsey spaces $\mathcal{R}_{\alpha}$ for $\alpha<\omega_{1}$ defined and studied by Dobrinen and Todorcevic in \cite{DandT2}. Then find upper and lower bounds for Ramsey numbers based on these games.
\end{problem}
Upper and lower bounds for the Ramsey numbers associated to the spaces $\mathcal{R}_{\alpha}$, $\alpha>1$, also have similar applications to characterizing the Dedekind cuts that can be associated to ultrafilter mappings from Ramsey for $\mathcal{R}_{\alpha}$ ultrafilters. In addition to these open problems there are problems still open related to playing the game of Mines.

 Theorem 4 provides an explicit description of a winning strategy for Mines$_{3}$. The proof of Zermelo's theorem in \cite{Yurii} can be adapted to show that either player one or player two must have a winning strategy for any game of Mines$_{m}(n,k)$ where $m\ge \mathcal{R}_{1}(n,k)$. A standard strategy stealing argument can then be used to show that player one must have a winning strategy for the game. However, the proof of Zermelo's theorem does not provide for an explicit description of how player one should play to win the game.  In the game of Mines$_{5}(3,1)$, Player one has the opening move allowing them to guarantee two of the three corners (assuming this is still the optimal strategy with larger game boards) which could give them the win. However, player two has the ability to react to player one's moves and possibly prevent them from forming a $\triangle_3$. In addition, some game boards will have an odd number of positions giving player one an additional position over player two; perhaps this gives them an even bigger advantage. Regardless, giving an explicit description of the winning strategy for a game board of size $\triangle_m$ seems like a difficult problem.

\begin{problem}
Find an explicit description of the winning strategy for player one in a game of Mines$_{m}(n,k)$ where $m\ge \mathcal{R}_{1}(n,k)$. In particular, describe the winning strategy for player one in a game of Mines$_{5}(3,1)$.
\end{problem}

The complexity of Mines increases as the number of players is increased from 2 players to $k$ players. In this variation, we have a different set of Ramsey numbers. It is clear that as the number of players increases the size of the associated triangular Ramsey numbers also increase.
\begin{problem}
 How does the game of Mines change when adding more players? In particular, find upper and lower bounds for triangular Ramsey numbers for variations of Mines with more than two players.
\end{problem}

In the off-diagonal case of the game Mines$_{m}(p,q,1)$, if $p<q$, then player one has an advantage as they are now constructing a smaller triangular set. Player two can be given an advantage in this game by allowing them to play more than one $\triangle_{1}$ on each turn. The question then becomes, how many $\triangle_1$s should player two be allowed to color per turn such that the game is fair? Note that, if $m=p+q-1$ and we let player two play $q$ $\triangle_{1}$s per turn then they clearly have a winning strategy provided that $p>1$. 

\begin{problem}
Suppose $p$ and $q$ are given with $1<p<q$. What is the smallest number of $\triangle_{1}$s player two can be allowed to color per turn such that player two has the winning strategy?
\end{problem}

In addition to simplifying the finite Ramsey theory of the Ramsey space $\mathcal{R}_{1}$, a secondary goal was to introduce a game that was simple enough to played by young children. In this way, the game can be played with the intention of developing  logical skill. In the paper \cite{haggard1977game}, the authors give evidence that the game of Tri can be used to develop logical thinking skills in young children. Our final problem will be of interest mainly to researchers in math education.

\begin{problem}
Give concrete evidence that the game Mines can be played by young children and used to develop their visual disembedding skills.
\end{problem}

------------

\end{document}